\newcommand\blfootnote[1]{
  \begingroup
  \renewcommand\thefootnote{}\footnote{#1}
  \addtocounter{footnote}{-1}
  \endgroup
}
\newtheorem{theorem}{Theorem}
\newtheorem{corollary}{Corollary}
\newtheorem{proposition}{Proposition}
\newtheorem{lemma}{Lemma}
\begin{document}

\begin{center}

{\LARGE Asymptotic distributions for weighted power sums of extreme values}

\bigskip
\bigskip
\textsc{Lillian Achola Oluoch} and \textsc{L\'aszl\'o Viharos}

\medskip
Bolyai Institute, University of Szeged \\
Aradi v\'ertan\'uk tere 1, 6720, Szeged, Hungary
\end{center}
\blfootnote{\hspace{-19pt}{\it AMS Subject Classification}: 60F05, 62G32. \\
{\it Key words and phrases}: tail index, regular variation, weighted power sum, maximum domain of attraction. \\
{\it E-mail addresses}: oluoch@math.u-szeged.hu (Lillian Achola Oluoch), viharos@math.u-szeged.hu (L\'aszl\'o Viharos)}

\begin{abstract}
Let $X_{1,n}\le\cdots\le X_{n,n}$ be the
order statistics of $n$ independent random variables with a common
distribution function $F$ having right heavy tail with tail index $\gamma$. Given known constants
$d_{i,n}$, $1\le i\le n$, consider the weighted power sums $\sum^{k_n}_{i=1}d_{n+1-i,n}\log^pX_{n+1-i,n}$, where $p>0$ and the $k_n$ are positive integers such that $k_n\to\infty$ and $k_n/n\to0$ as $n\to\infty$. Under some constraints on the weights $d_{i,n}$, we prove asymptotic normality for the power sums over the whole heavy-tail model. We apply the obtained result to construct a new class of estimators for the parameter $\gamma$.
\end{abstract}

\section{Introduction and results.} Let $X,X_1,X_2,\ldots$ be independent random variables with a common distribution function $F(x)=P\{X\le x\},\ x\in\mathbb R,$ and for each integer $n\ge 1$ let $X_{1,n}\le\cdots\le X_{n,n}$ denote the order statistics pertaining to the sample $X_1,\ldots,X_n$. For a constant $\gamma>0$, let ${\cal R}_\gamma$ be the class of all probability distribution functions $F$ such that
\[
1-F(x)=x^{-1/\gamma} L(x),\quad 0<x<\infty,
\]
where $L$ is a function slowly varying at infinity.
Without loss of generality we assume that $F(1-) = 0$ for all $F \in {\cal R}_\gamma$.
If $Q(\cdot)$ denotes the quantile function of $F$ defined as
\[Q(s)=\inf\{x:F(x)\ge s\},\ 0<s\le 1,\quad Q(0)=Q(0+),\]
then $F \in {\cal R}_\gamma$ if and only if
\begin{equation}
\label{eq:quant}
Q( 1- s) = s^{-\gamma} \ell(s),
\end{equation}
where $\ell$ is a slowly varying function at 0.
Let $k_n$ be a sequence of integers such that
\begin{equation}
\label{kn}
1\le k_n < n, \quad\ k_n\to\infty\quad\ {\rm and}\quad\ k_n/n\to0 \quad{\rm as}\ n\to\infty.
\end{equation}
For some constants $d_{i,n}$, $1\le i\le n$, consider the weighted power sums of the extreme values $X_{n-k_n+1,n},\ldots,X_{n,n}$:
\[
S_n(p):=\sum^{k_n}_{i=1}d_{n+1-i,n}\log^pX_{n+1-i,n},
\]
where $p>0$ is a fixed number. Our aim is to study the asymptotic behavior of $S_n(p)$ as $n\to\infty$ whenever $F \in {\cal R}_\gamma$.

Cs\"org\H o et al.~\cite{CsHM} found necessary and sufficient
conditions for the existence of normalizing and centering constants
$A_n>0$ and $C_n$ such that the sequence
\[\frac1{A_n}\left\{\sum_{i=1}^{k_n}X_{n+1-i,n}-C_n\right\}\]
converges in distribution along subsequences of the integers $\{n\}$ to
non-degenerate limits and completely described the possible
subsequential limiting distributions.
Viharos \cite{Viharos93} generalized this result for linear combinations $\sum_{i=k+1}^{k_n}d_{n+1-i,n}f(X_{n+1-i,n})$ of extreme values, where $f$ is a Borel-measurable function. Assuming $F \in {\cal R}_\gamma$ and using the results in \cite{Viharos93}, we will prove asymptotic normality for the properly normalized and centered sequence $S_n(p)$.
As an application, we derive a class of asymptotically normal estimators for the parameter $\gamma$.

Linear combinations of order statistics are widely studied in the literature.
Recently, Barczyk et al.~\cite{Barczyk} obtained limit theorems for L-statistics
\[
L_n = \sum_{i=1}^{k_n} c_{i,n} X_{i:k_n},
\]
where $k_n\to\infty$ as $n\to\infty$, $c_{i,n}$ are real scores and the order statistics $X_{i:k_n}$ correspond to a possibly non i.i.d.~triangular array $(X_{i,n})_{1\le i\le k_n}$ of infinitesimal and rowwise independent random variables with heavy tails. Their approach is related to the extreme order statistics: they give sufficient conditions for the scores $c_{i,n}$ so that only the extreme parts of the L-statistics contribute to the limit law.

We will assume as in \cite{Viharos93} that the weights $d_{i,n}$ are of the form
\[
d_{i,n}=n\int_{(i-1)/n}^{i/n}\bar L(t)dt,
\quad 1\le i\le n,
\]
for some non-negative continuous function $\bar L$ defined on (0,1)
which satisfies the following condition:

\bigskip\noindent
Condition $\mathbf{\bar L}$:

a) There exists a constant $-1/2<\rho<\infty$ such that $\bar L(1-t)=t^{\rho} \bar\ell(t)$ on $(0,1)$ for some function $\bar\ell(\cdot)$ slowly varying at $0$ and $\bar\ell'(t)=t^{-1}\bar\ell(t)\varepsilon(t)$ on some $(0,\delta)$ with a continuous function $\varepsilon(\cdot)$ for which $\varepsilon(t)\to0$ as $t\to0$.

b) For all $M\ge 1$,
\[\sup_{1/M<y<M} \left|\int\limits_0^y\frac{(\bar\ell(u/n)-\bar\ell(y/n))u^\rho}{\bar\ell(y/n)y^\rho}du\right|\to 0,\quad n\to\infty.\]

Throughout the paper we use the convention
$\int_a^b=\int_{[a,b)}$ when we integrate with respect to
a left continuous integrator. Define
\[
J(s)=s^{\rho} \bar\ell(s),\quad 0<s<1,
\]
and
\[
g(t)=-(\log Q(1-t-))^p,
\]
where $Q(1-s-)$ denotes the left-continuous version of the right-continuous function $Q(1-s)$, $0<s<1$,
\[
K(t)=\int_{1/2}^t J(s)dg(s),\quad 0<t<1,
\]
and
\[
\sigma^2(s,t)=\int^t_s\int^t_s(u\wedge v-uv)dK(u)dK(v),\quad 0\le s\le t\le 1,
\]
where $u\wedge v=\min(u,v)$.
We introduce the centering sequences
\[
\mu_n:=-n\int^{k_n/n}_0J(u)g(u)du,
\]
and 
\[
\bar \mu_n = -n\int^{k_n/n}_{1/n} J(u)g(u) du-d_{n,n}g\!\left(\frac1n\right),
\]
while the normalizing sequence will be given by
\[
a_n:=
\begin{cases}
\sigma(1/n,k_n/n) &\ \text{if}\quad \sigma(1/n,k_n/n)>0,\\
1 &\ \text{otherwise.}
\end{cases}
\]

We state now the main limit theorem of the paper. Throughout, $\stackrel{\mathcal{D}}{\longrightarrow}$ denotes convergence in distribution, $\stackrel{\mathbb{P}}{\longrightarrow}$ denotes convergence in probability, and limiting and order relations are always meant as $n\to\infty$ if not specified otherwise.

\begin{theorem}
\label{thm1}
{\rm (i)} Assume that $F\in{\cal R}_\gamma$, {\rm (\ref{kn})} holds  and suppose that condition $\mathbf{\bar{L}}$ is satisfied for the weighs $d_{i,n}$. Then
\begin{equation}
\label{eq:asnorm}
\frac1{\sqrt n a_n}\left\{\sum^{k_n}_{i=1}d_{n+1-i,n}\log^p X_{n+1-i,n} - \bar \mu_n\right\} \stackrel{\mathcal{D}}{\longrightarrow} N(0,1).
\end{equation}
{\rm (ii)} If in addition to the conditions of {\rm (i)} we have $(\log n)/k_n^\varepsilon\to0$ for some $0 <\varepsilon < \rho+1/2$, then \eqref{eq:asnorm} holds with $\mu_n$ replacing $\bar \mu_n$.
\end{theorem}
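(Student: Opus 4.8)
The plan is to derive the theorem from the general limit theorem of \cite{Viharos93} for linear combinations $\sum_i d_{n+1-i,n}f(X_{n+1-i,n})$ by specializing to $f(x)=(\log x)^p$. First I would pass through the probability integral transform: writing $X_{n+1-i,n}\stackrel{\mathcal{D}}{=}Q(U_{n+1-i,n})$ for uniform order statistics $U_{1,n}\le\cdots\le U_{n,n}$ and setting $V_{i,n}=1-U_{n+1-i,n}$, one represents $S_n(p)$ in distribution as $-\sum_{i=1}^{k_n}d_{n+1-i,n}\,g(V_{i,n})$, where $g(t)=-(\log Q(1-t-))^p$ is exactly the function in the statement, since $(\log Q(1-V_{i,n}))^p=-g(V_{i,n})$. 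Here the upper extremes $X_{n+1-i,n}$ become the \emph{smallest} uniform order statistics $V_{i,n}$, concentrated near $0$, which is the regime in which the behavior of $g$ and of the weight function matters.

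Next I would recast the weights in the form used by \cite{Viharos93}. Using $d_{i,n}=n\int_{(i-1)/n}^{i/n}\bar L(t)\,dt$, the reflection $t\mapsto 1-t$ together with part (a) of Condition $\bar L$ gives $\bar L(1-s)=s^{\rho}\bar\ell(s)=J(s)$, so that $d_{n+1-i,n}=n\int_{(i-1)/n}^{i/n}J(s)\,ds$ and $J$ is precisely the density generating the scores. The task then reduces to checking the hypotheses of the general theorem for this $J$ and this $g$. Part (a) of Condition $\bar L$ supplies the regular variation of $J$ with index $\rho>-1/2$ and the controlled derivative of $\bar\ell$, while part (b) supplies the uniform approximation that allows $\bar\ell(u/n)$ to be replaced by $\bar\ell(y/n)$ inside the relevant integrals. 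The hypotheses concerning $g$ follow from $F\in\mathcal R_\gamma$: since $Q(1-s)=s^{-\gamma}\ell(s)$, we have $(\log Q(1-s))^p=(\gamma|\log s|+\log\ell(s))^p\sim(\gamma|\log s|)^p$, so $g$ has only logarithmic-power growth at $0$, its Lebesgue--Stieltjes measure $dg$ is well behaved, and the integrals defining $K$ and $\sigma^2$ converge. With these verifications in place, the theorem of \cite{Viharos93} yields asymptotic normality of $S_n(p)$ after centering by the integral $-n\int Jg$ with the extreme ($i=1$) term handled separately through $d_{n,n}g(1/n)$, and normalizing by $\sqrt n\,\sigma(1/n,k_n/n)=\sqrt n\,a_n$; this is exactly assertion (i) with centering $\bar\mu_n$.

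For part (ii) I would show that $\mu_n$ and $\bar\mu_n$ are interchangeable after normalization, i.e.\ that $(\mu_n-\bar\mu_n)/(\sqrt n\,a_n)\to0$. By definition $\mu_n-\bar\mu_n=-n\int_0^{1/n}J(u)g(u)\,du+d_{n,n}\,g(1/n)$. Using $J(u)=u^{\rho}\bar\ell(u)$ with $\rho>-1/2>-1$, the asymptotics $g(u)\sim-(\gamma|\log u|)^p$, and $d_{n,n}=n\int_0^{1/n}J(t)\,dt$, both contributions are of order $n^{-\rho}\bar\ell(1/n)(\log n)^p$, up to constants. On the other hand, writing $\sigma^2(1/n,k_n/n)$ as $\tfrac1n(K(k_n/n)-K(1/n))^2+\int_{1/n}^{k_n/n}(K(k_n/n)-K(w))^2\,dw$ minus the lower-order term $(\int_{1/n}^{k_n/n}u\,dK(u))^2$, and inserting the asymptotics of $K$ obtained from $dK(s)\sim p\gamma^p\bar\ell(s)s^{\rho-1}|\log s|^{p-1}\,ds$, one finds in every case $-1/2<\rho<\infty$ that the bulk contribution near $k_n/n$ dominates and $a_n\asymp(k_n/n)^{\rho+1/2}\bar\ell(k_n/n)|\log(k_n/n)|^{p-1}$. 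Hence $\sqrt n\,a_n$ exceeds $n^{-\rho}\bar\ell(1/n)(\log n)^p$ by a factor of order $k_n^{\rho+1/2}$, up to slowly varying and logarithmic corrections, so that $(\mu_n-\bar\mu_n)/(\sqrt n\,a_n)$ is controlled by $(\log n)/k_n^{\rho+1/2}$. The assumption $(\log n)/k_n^{\varepsilon}\to0$ for some $0<\varepsilon<\rho+1/2$ then forces this quotient to vanish, which proves (ii).

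I expect the main obstacle to be the verification, near $0$, of the hypotheses of \cite{Viharos93} for the present $g$: one must control the interplay between the regularly varying weight $J$ of index $\rho$ and the logarithmic-power growth of $g$ so that the weak approximation of the uniform empirical (quantile) process by a Brownian bridge survives the weighting and the tail contribution to the variance is of the correct order, neither negligible in a way that destroys asymptotic normality nor dominant in a way that breaks it. The sharp determination of the order of $a_n$ through $\sigma(1/n,k_n/n)$, including the proof that the bulk near $k_n/n$ dominates the extreme near $1/n$ uniformly in the sign of $\rho$, is the most delicate point and is exactly where the constraint $0<\varepsilon<\rho+1/2$ enters.
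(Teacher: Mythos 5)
Your overall strategy for part (i) --- reduce to the Corollary of \cite{Viharos93} with $f(x)=(\log x)^p$ and weights generated by $J$ --- is indeed the paper's strategy, but your proposal stops exactly where the paper's proof begins. The hypotheses of that Corollary are not ``$J$ is regularly varying, $dg$ is well behaved, and the integrals defining $K$ and $\sigma^2$ converge''; they are quantitative conditions on the increments of $K$, namely that the two sequences of functions $\psi_n$ and $\varphi_n$ (increments of $K$ at scales $k_n/n$ and $1/n$, normalized by $\sqrt n\,a_n$) tend to zero pointwise. Verifying this is the entire technical content of the paper's Lemma \ref{lemma1} and Lemma \ref{lemma2}: one proves that $H:=(-K(1-\cdot))^\leftarrow$ lies in the maximum domain of attraction $\Delta(-\rho)$, which by Proposition \ref{prop:maxdomain} amounts to showing
\[
\lim_{s\downarrow0}\frac{K(xs)-K(ys)}{K(vs)-K(ws)}=\frac{x^{\rho}-y^{\rho}}{v^{\rho}-w^{\rho}}
\]
(log-ratio limit when $\rho=0$). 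This is done by writing $K(t)=-\int_{Q(1/2)}^{Q(1-t)}J_1(u)\,du$ with $J_1(u)=pJ(1-F(u))(\log u)^{p-1}u^{-1}\in RV^\infty_{-(\rho/\gamma)-1}$, applying Karamata's theorem (with the cases $\rho>0$ and $\rho<0$ treated separately because of the additive constant), and running a separate Lagrange mean-value argument for $\rho=0$, where Karamata gives no information about increments; Lemmas 2.11 and 2.12 of \cite{CsHM} then convert the domain-of-attraction property into $\psi_n,\varphi_n\to0$. None of this appears in your proposal: you yourself flag it as ``the main obstacle'' and leave it unresolved, and your reformulation of the needed hypothesis (survival of a Brownian-bridge approximation under the weighting) describes the internals of the proof in \cite{Viharos93}, not a checkable condition on $K$. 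So part (i) has a genuine gap.

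Your part (ii) is essentially the paper's proof: the same decomposition $\mu_n-\bar\mu_n=-n\int_0^{1/n}J(u)g(u)\,du+d_{n,n}g(1/n)$, the same order $n^{-\rho}\bar\ell(1/n)(\log n)^p$ for both remainder terms, and the same comparison with $\sqrt n\,a_n$. One caveat: ``up to slowly varying and logarithmic corrections'' must be made precise, since those corrections, $\bar\ell(1/n)/\bar\ell(k_n/n)$ and powers of $\log n/\log(n/k_n)$, can diverge; the paper absorbs them with Potter bounds, choosing $\delta$ with $p\delta<\rho+1/2-\varepsilon$, and this --- not the determination of the order of $a_n$, as you suggest at the end --- is where the strict inequality $\varepsilon<\rho+1/2$ is actually spent. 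Note also that the asymptotics of $a_n$ that you re-derive by hand is Proposition \ref{propseq} of the paper, whose proof again rests on Lemma \ref{lemma1}; so even this step of your argument circles back to the missing domain-of-attraction analysis.
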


The special case $p=1$ of Theorem \ref{thm1}(i) was stated in Theorem 1.2 of \cite{Viharos95}.
Several estimators exist for the tail index $\gamma$ among which Hill's estimator
is the most classical (see Hill \cite{Hill}).
Dekkers et al.~\cite{Dekkers} proposed a moment estimator based on the statistics
\begin{equation}
\label{eq:def-Mn}
\frac{1}{k_n} \sum_{i=1}^{k_n}
\left( \log \frac{X_{n+1-i,n}}{X_{n-k_n,n}} \right)^j,\quad j = 1,2.
\end{equation}
The case $j=1$ yields the Hill estimator.
Segers \cite{Segers} investigated more general statistics of the form
\begin{equation}
\label{eq:fstat}
\frac{1}{k_n} \sum_{i=1}^{k_n} 
f\left( \frac{X_{n+1-i,n}}{X_{n-k_n,n}} \right),
\end{equation}
for a nice class of functions $f$, called residual estimators. Segers proved weak consistency and asymptotic normality under general conditions.
More recent\-ly, Ciuperca and Mercadier \cite{Ciuperca} obtained a class of tail index estimators based on the weighted power sums of the statistics $\big(\log(X_{n+1-i,n} / X_{n-k_n,n})\big)_{1\le i\le k_n}$ and proved limit theorems for the estimators.
We use the weighted power sums of the extreme values $(\log X_{n+1-i,n})_{1\le i\le k_n}$ to construct a new class of estimators for $\gamma$.

The following proposition describes the asymptotic behavior of the centering and normalizing sequences.

\begin{proposition}
\label{propseq}
Assume the conditions of Theorem {\rm \ref{thm1}(i)}. Then
\begin{equation}
\label{sigma}
\sigma(1/n,k_n/n)\sim p\gamma^p\left(\frac2{(1+\rho)(1+2\rho)}\right)^{1/2}\left(\frac{k_n}n\right)^{\rho+1/2}
\left(\log\frac n{k_n}\right)^{p-1}\bar \ell\left(\frac{k_n}n\right)
\end{equation}
and $\mu_n\sim\gamma^p\alpha_n$, where $\alpha_n=\frac{k_n}{\rho+1}J\left(\frac{k_n}n\right)\big(\log\frac n{k_n}\big)^p$
($x_n\sim y_n$ means that $x_n/y_n\to1$).
\end{proposition}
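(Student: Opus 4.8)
The plan is to replace every ingredient by its leading-order asymptotics and thereby reduce both assertions to two elementary deterministic integrals. First I would exploit the tail representation \eqref{eq:quant}: writing $\log Q(1-t-)=\gamma\log(1/t)+\log\ell(t)$ and using that $\log\ell(t)=o(\log(1/t))$ as $t\to0$ for a function $\ell$ slowly varying at $0$, I obtain
\[
g(t)=-(\log Q(1-t-))^p\sim-\gamma^p\Big(\log\tfrac1t\Big)^p,\qquad t\to0,
\]
and, after the corresponding control of the Stieltjes integrator,
\[
dg(t)\sim\gamma^p p\Big(\log\tfrac1t\Big)^{p-1}\frac{dt}t .
\]
Since $J(s)=s^\rho\bar\ell(s)$, this gives $J(u)\,dg(u)\sim\gamma^p p\,u^{\rho-1}\bar\ell(u)(\log(1/u))^{p-1}\,du$, which is the only input that survives into the final constants.

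For $\mu_n$ I would substitute $u=(k_n/n)x$ in $\mu_n=-n\int_0^{k_n/n}J(u)g(u)\,du$. Using the slow variation of $\bar\ell$, so that $\bar\ell((k_n/n)x)/\bar\ell(k_n/n)\to1$ and $(\log(n/(k_nx)))^p/(\log(n/k_n))^p\to1$ for fixed $x\in(0,1]$, together with condition $\mathbf{\bar L}$ and dominated convergence, the integral collapses to $\int_0^1x^\rho\,dx=1/(\rho+1)$. Collecting the powers of $k_n/n$ via $n(k_n/n)^{\rho+1}=k_n(k_n/n)^\rho$ and recognizing $(k_n/n)^\rho\bar\ell(k_n/n)=J(k_n/n)$ yields exactly $\mu_n\sim\gamma^p\frac{k_n}{\rho+1}J(k_n/n)(\log(n/k_n))^p=\gamma^p\alpha_n$.

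For the variance I would start from $\sigma^2(1/n,k_n/n)=\int\!\!\int(u\wedge v-uv)\,dK(u)\,dK(v)$ with $dK=J\,dg$. After the same rescaling $u=(k_n/n)x$, $v=(k_n/n)y$ one checks that $uv/(u\wedge v)=(k_n/n)\max(x,y)\le k_n/n\to0$ uniformly on $[1/k_n,1]^2$, so the $uv$ term is negligible and $u\wedge v\sim(k_n/n)(x\wedge y)$. Inserting the asymptotics of $J\,dg$ reduces $\sigma^2$ to
\[
\gamma^{2p}p^2\Big(\tfrac{k_n}n\Big)^{2\rho+1}\bar\ell\Big(\tfrac{k_n}n\Big)^2\Big(\log\tfrac n{k_n}\Big)^{2p-2}\int_0^1\!\!\int_0^1(x\wedge y)\,x^{\rho-1}y^{\rho-1}\,dx\,dy,
\]
and a direct computation (splitting at $x=y$ and using $\rho>-1/2$) gives the double integral $\frac{2}{(\rho+1)(2\rho+1)}$. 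Taking square roots produces \eqref{sigma}.

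The main obstacle is making the limit interchanges rigorous near the lower endpoint $x=1/k_n\to0$, where $x^{\rho-1}$ blows up when $\rho<1$, so the rescaled integrands are not dominated by a fixed integrable function in an elementary way. This is exactly where $\rho>-1/2$ and condition $\mathbf{\bar L}$ enter: the factor $x\wedge y$ supplies an extra power, turning $x^{\rho-1}$ into the integrable $x^\rho$ (integrable iff $\rho>-1$) and ultimately into $y^{2\rho}$ (integrable iff $\rho>-1/2$), while part (b) of condition $\mathbf{\bar L}$ furnishes the uniform control of $\bar\ell((k_n/n)x)/\bar\ell(k_n/n)$ needed to replace the slowly varying factor by its value at $k_n/n$ under the integral sign. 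The left-continuity conventions for $g$ and $K$ affect only lower-order terms and can be absorbed into the $o(1)$ errors.
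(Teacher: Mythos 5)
Your treatment of $\mu_n$ is fine: the rescaling $u=(k_n/n)x$ plus a domination argument is essentially a proof of Karamata's theorem, which is exactly what the paper invokes for that half. The genuine gap is in the variance half, and it sits in the step you pass over with ``after the corresponding control of the Stieltjes integrator'': the claim $dg(t)\sim\gamma^p p(\log(1/t))^{p-1}\,dt/t$ has no meaning as stated, and is false in any pointwise sense, because $g(t)=-(\log Q(1-t-))^p$ is built from the quantile function of an arbitrary $F\in\mathcal{R}_\gamma$, which may have jumps and flat stretches. For instance, the integer-valued law with $1-F(x)=\lfloor x\rfloor^{-1/\gamma}$ for $x\ge1$ lies in $\mathcal{R}_\gamma$, its quantile function is $Q(1-s)=\lceil s^{-\gamma}\rceil$, and then $dg$ is a purely atomic measure with no density whatsoever. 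What is true is an increment asymptotic, $g(xs)-g(ys)\sim p\gamma^p(-\log s)^{p-1}\log(x/y)$ as $s\downarrow0$ (the paper obtains this via the mean value theorem, culminating in \eqref{eq:Kdiff}). Passing from increment asymptotics of $K$ to asymptotics of the double Stieltjes integral $\sigma^2(1/n,k_n/n)=\int\!\!\int(u\wedge v-uv)\,dK(u)\,dK(v)$ is the real analytic work: one must either integrate by parts twice against the kernel $u\wedge v-uv$, or prove weak convergence of the rescaled measures $dK$ together with uniform integrability at the lower endpoint $1/n$. Your plan never does this, and the two ingredients you offer in its place do not do it: the factor $x\wedge y$ together with $\rho>-1/2$ only makes the \emph{limit} integral finite, and part b) of condition $\mathbf{\bar L}$ cannot supply the uniform control you attribute to it, since it constrains $\bar\ell$ at the scale $1/n$ (arguments $u/n$, $y/n$ with $u,y$ bounded) and serves the proof of Theorem \ref{thm1}, not this proposition; uniform control of $\bar\ell((k_n/n)x)/\bar\ell(k_n/n)$ on $[1/k_n,1]$ would instead come from the Potter bounds.

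This missing step is precisely where the paper takes a different, shorter route: Lemma \ref{lemma1} verifies from the increment asymptotics of $K$ that $H=(-K(1-\cdot))^\leftarrow\in\Delta(-\rho)$, and then Lemmas 2.9 and 2.10 and equation (2.29) of \cite{CsHM} are quoted; those results contain exactly the integration-by-parts and uniform-integrability analysis that converts this domain-of-attraction property into the asymptotics of $\sigma(1/n,k_n/n)$, including the constant and the separate treatment of $\rho=0$. A corrected version of your plan would either have to reprove those lemmas or reduce to them as the paper does. To give credit where due: your formal skeleton is right --- the limit integral $\int_0^1\!\int_0^1(x\wedge y)x^{\rho-1}y^{\rho-1}\,dx\,dy=\frac{2}{(1+\rho)(1+2\rho)}$, the negligibility of the $uv$ term, and the bookkeeping of the powers of $k_n/n$, $\log(n/k_n)$ and $\bar\ell(k_n/n)$ all reproduce \eqref{sigma} correctly; what is absent is the justification that the heuristic density of $dK$ may be used inside the double integral, and that is the heart of the proof.
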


The next corollary describes the asymptotic behavior of the weighted norms $R_n(p):=(S_n(p))^{1/p}$.

\begin{corollary}
\label{cor1}
Assume the conditions of Theorem {\rm \ref{thm1}(ii)}. Then
\[
\frac1{\gamma\sqrt2}\left(\frac{1+2\rho}{1+\rho}\right)^{1/2}\sqrt{k_n}\log\frac n{k_n}\left\{\frac1{\alpha_n^{1/p}}R_n(p)-\left(\frac{\mu_n}{\alpha_n}\right)^{1/p}\right\} \stackrel{\mathcal{D}}{\longrightarrow} N(0,1).
\]
\end{corollary}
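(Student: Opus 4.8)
The plan is to read Corollary~\ref{cor1} as a delta-method consequence of Theorem~\ref{thm1}(ii) for the smooth map $h(x)=x^{1/p}$, evaluating the resulting deterministic scaling by means of Proposition~\ref{propseq}. Setting $Z_n:=(S_n(p)-\mu_n)/(\sqrt n\,a_n)$, Theorem~\ref{thm1}(ii) gives $Z_n\stackrel{\mathcal D}{\longrightarrow}N(0,1)$, i.e.\ $S_n(p)=\mu_n+\sqrt n\,a_n Z_n$. Since $R_n(p)=h(S_n(p))$ with $h'(x)=\tfrac1p x^{1/p-1}$, the idea is to show that the fluctuations of $S_n(p)$ about $\mu_n$ are of smaller order than $\mu_n$, so that a first-order expansion of $h$ at $\mu_n$ legitimately describes $R_n(p)-\mu_n^{1/p}$, and then to pin down the normalization from Proposition~\ref{propseq}.

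First I would establish the relative consistency $S_n(p)/\mu_n\stackrel{\mathbb P}{\longrightarrow}1$. Combining \eqref{sigma} with the formula $\alpha_n=\tfrac{k_n}{\rho+1}J(k_n/n)(\log(n/k_n))^p$ and $J(s)=s^\rho\bar\ell(s)$ shows that $\sqrt n\,a_n/\alpha_n$ is of order $(\sqrt{k_n}\log(n/k_n))^{-1}$, which tends to $0$ because $k_n\to\infty$ and $k_n/n\to0$; together with $\mu_n\sim\gamma^p\alpha_n$ this gives $\sqrt n\,a_n/\mu_n\to0$ and hence $W_n:=(S_n(p)-\mu_n)/\mu_n=(\sqrt n\,a_n/\mu_n)Z_n\stackrel{\mathbb P}{\longrightarrow}0$. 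In particular $S_n(p)$ is eventually positive, so $R_n(p)$ is well defined, and writing $S_n(p)=\mu_n(1+W_n)$ yields
\[
R_n(p)-\mu_n^{1/p}=\mu_n^{1/p}\big((1+W_n)^{1/p}-1\big)=\tfrac1p\,\mu_n^{1/p-1}(S_n(p)-\mu_n)\big(1+o_{\mathbb P}(1)\big).
\]

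Next I would identify the normalizing factor. Dividing by $\alpha_n^{1/p}$ and using $\mu_n^{1/p-1}\sim\gamma^{1-p}\alpha_n^{1/p-1}$ gives
\[
\frac1{\alpha_n^{1/p}}\big(R_n(p)-\mu_n^{1/p}\big)=\frac{\gamma^{1-p}}{p}\,\frac{\sqrt n\,a_n}{\alpha_n}\,Z_n\,\big(1+o_{\mathbb P}(1)\big).
\]
The remaining step is purely deterministic: inserting \eqref{sigma} and the formula for $\alpha_n$, the slowly varying factor $\bar\ell(k_n/n)$ cancels, the powers of $k_n/n$ and of $\log(n/k_n)$ combine, and one obtains
\[
\frac{\gamma^{1-p}}{p}\,\frac{\sqrt n\,a_n}{\alpha_n}\sim\gamma\Big(\frac{2(1+\rho)}{1+2\rho}\Big)^{1/2}\frac1{\sqrt{k_n}\,\log(n/k_n)}.
\]
Multiplying this asymptotic standard deviation by the prefactor $\frac1{\gamma\sqrt2}\big(\frac{1+2\rho}{1+\rho}\big)^{1/2}\sqrt{k_n}\log(n/k_n)$ of the corollary produces exactly $1$, so the stated linear combination converges to $N(0,1)$ by Slutsky's theorem.

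The main obstacle is controlling the Taylor remainder against the diverging normalization $\sqrt{k_n}\log(n/k_n)$. Because $\mu_n$, $a_n$ and $\alpha_n$ are deterministic sequences rather than fixed constants, one cannot simply quote a classical delta-method theorem; instead I would argue directly from $S_n(p)=\mu_n(1+W_n)$. After multiplication by the corollary's prefactor the linear term becomes $(1+o(1))Z_n$, which is bounded in probability, whereas the quadratic remainder $(1+W_n)^{1/p}-1-\tfrac1p W_n=O(W_n^2)$ carries, relative to the linear term, an extra factor $W_n=o_{\mathbb P}(1)$ and so contributes $o_{\mathbb P}(1)$; the relative consistency $W_n\stackrel{\mathbb P}{\longrightarrow}0$ from the second step is precisely what makes this comparison work. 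Finally, the cancellation of $\bar\ell(k_n/n)$ and the exact matching of the numerical constants from Proposition~\ref{propseq} must be tracked, but these are routine once the last displayed equivalence is secured; in particular \eqref{sigma} guarantees $\sigma(1/n,k_n/n)>0$ eventually, so $a_n=\sigma(1/n,k_n/n)$ and the degenerate branch in the definition of $a_n$ never arises.
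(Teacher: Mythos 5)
Your proposal is correct and takes essentially the same route as the paper's own proof: both combine Theorem~\ref{thm1}(ii) with the asymptotics of Proposition~\ref{propseq} to renormalize by $\alpha_n$, establish the relative consistency $S_n(p)/\alpha_n \stackrel{\mathbb{P}}{\longrightarrow} \gamma^p$, and then apply a first-order (delta-method) expansion of $x\mapsto x^{1/p}$ followed by Slutsky's theorem, with the same cancellation of constants. The only cosmetic difference is that the paper controls the linearization via Lagrange's mean value theorem with a random intermediate point $\xi$ between $\mu_n/\alpha_n$ and $S_n(p)/\alpha_n$, whereas you use a Taylor expansion of $(1+W_n)^{1/p}$ with an $O(W_n^2)$ remainder.
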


By Proposition \ref{propseq} and Corollary \ref{cor1}, 
\[
\widehat\gamma_n:=\frac1{\alpha_n^{1/p}}R_n(p)
\]
is an asymptotically normal estimator for $\gamma$. This is a generalization of the estimator proposed in \cite{Viharos97}.
Asymptotic normality was proved for the Hill estimator and for the estimators in \cite{Ciuperca} and \cite{Segers} under general conditions but not for every distribution in ${\cal R}_\gamma$. However, $\widehat\gamma_n$ is asymptotically normal over the whole model ${\cal R}_\gamma$.

To investigate the asymptotic bias of the estimator $\widehat\gamma_n$, we assume the following conditions:

\bigskip\noindent
$\displaystyle
(B_1)\quad
\sqrt{k_n}\log\frac n{k_n} \sup_{0\le u \le k_n/n}\left|\frac{\log\ell(u)}{\log u}\right| \to0.$

\smallskip\noindent
($B_2$) $\sqrt{k_n}/\log n \to0$.

\smallskip\noindent
($B_3$) $(\log n)/ k_n^{\rho+\frac12} n \to0$.

\smallskip\noindent
($B_4$) $J(s)=s^{\rho},\quad 0<s<1$.

\bigskip
Conditions ($B_2$) and ($B_3$) imply that $\rho>0$.

\begin{corollary}
\label{cor2}
Assume the conditions {\rm ($B_1$)-($B_4$)}, and the conditions of Theorem {\rm \ref{thm1}(i)}, and set $t_n:=(\rho+1)\log(n/k_n)$. Then we have

{\rm (i)}
\[
\frac1{\gamma^p p\sqrt2}\left(\frac{1+2\rho}{1+\rho}\right)^{1/2}\sqrt{k_n}\log\frac n{k_n}
 \left\{\frac{S_n(p)}{\alpha_n} - \gamma^p\big(1 + p t_n^{-1}\big)\right\}
 \stackrel{\mathcal{D}}{\longrightarrow} N(0,1),
\]

{\rm (ii)}
\begin{equation}
\label{eq:norm1}
\frac1{\gamma\sqrt2}\left(\frac{1+2\rho}{1+\rho}\right)^{1/2}\sqrt{k_n}\log\frac n{k_n}
\big\{\widehat\gamma_n- \gamma\big(1+ t_n^{-1}\big)\big\}
 \stackrel{\mathcal{D}}{\longrightarrow} N(0,1).
\end{equation}
\end{corollary}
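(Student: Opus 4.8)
The plan is to deduce both parts from Theorem~\ref{thm1}(i) and Proposition~\ref{propseq}, in each case separating a stochastic term that carries the limiting normal law from a deterministic bias term that must be shown to vanish after rescaling. For part~(i), set $W_n:=S_n(p)/\alpha_n$ and write
\[
W_n-\gamma^p\bigl(1+p\,t_n^{-1}\bigr)=\frac{S_n(p)-\bar\mu_n}{\alpha_n}+\left(\frac{\bar\mu_n}{\alpha_n}-\gamma^p\bigl(1+p\,t_n^{-1}\bigr)\right).
\]
For the stochastic term I would factor $\frac{S_n(p)-\bar\mu_n}{\alpha_n}=\frac{\sqrt{n}\,a_n}{\alpha_n}\cdot\frac{S_n(p)-\bar\mu_n}{\sqrt{n}\,a_n}$ and apply Theorem~\ref{thm1}(i) to the second factor. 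Using \eqref{sigma} with $\bar\ell\equiv1$ (from $(B_4)$) and the explicit form of $\alpha_n$, a direct computation gives $\frac{\sqrt{n}\,a_n}{\alpha_n}\sim p\gamma^p\sqrt{2}\,(\tfrac{1+\rho}{1+2\rho})^{1/2}(\sqrt{k_n}\log(n/k_n))^{-1}$, so that the prefactor of~(i) multiplied by $\sqrt{n}\,a_n/\alpha_n$ tends to $1$; by Slutsky the rescaled stochastic term converges to $N(0,1)$.

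The core of the argument is to show that
\[
\sqrt{k_n}\log\frac{n}{k_n}\left(\frac{\bar\mu_n}{\alpha_n}-\gamma^p\bigl(1+p\,t_n^{-1}\bigr)\right)\to0.
\]
Here I would insert $J(u)=u^\rho$ and $g(u)=-\bigl(\gamma\log(1/u)+\log\ell(u)\bigr)^p$ into $\bar\mu_n$, substitute $u=(k_n/n)v$, and expand $(\log(n/k_n)-\log v)^p$ in powers of $\log v/\log(n/k_n)$. The identities $\int_0^1 v^\rho\,dv=(\rho+1)^{-1}$ and $\int_0^1 v^\rho\log v\,dv=-(\rho+1)^{-2}$ reproduce the leading constant $\gamma^p$ and the first correction $\gamma^p p/((\rho+1)\log(n/k_n))=\gamma^p p\,t_n^{-1}$, which is exactly the centering. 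The remaining bias then decomposes into pieces each handled by one condition: the second-order expansion term is $O((\log(n/k_n))^{-2})$ and is annihilated after rescaling by $(B_2)$, which forces $k_n=o((\log n)^2)$ and hence $\log(n/k_n)\sim\log n$; the boundary contributions---the part of the integral over $(0,1/n)$ and the term $d_{n,n}g(1/n)$, with $d_{n,n}\sim((\rho+1)n^\rho)^{-1}$ and $g(1/n)\sim-\gamma^p(\log n)^p$---are both of order $n^{-\rho}(\log n)^p$, contribute $O(k_n^{-(\rho+1)})$ to $\bar\mu_n/\alpha_n$, and are killed by $(B_3)$; and the contribution of $\log\ell$, bounded via $(1+x)^p\approx1+px$ by a constant times $\sup_{0\le u\le k_n/n}|\log\ell(u)/\log u|$, is killed directly by $(B_1)$. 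Slutsky then gives~(i).

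For part~(ii) I would apply the delta method with $h(x)=x^{1/p}$ to $\widehat\gamma_n=h(W_n)$, expanding about $x_0:=\gamma^p(1+p\,t_n^{-1})$. Since $h'(x_0)\to\gamma^{1-p}/p$, the prefactor of~(ii) times $h'(x_0)$ coincides asymptotically with the prefactor of~(i), so the linear term $h'(x_0)(W_n-x_0)$ carries the $N(0,1)$ limit by part~(i). The deterministic shift $h(x_0)-\gamma(1+t_n^{-1})=\gamma\bigl[(1+p\,t_n^{-1})^{1/p}-(1+t_n^{-1})\bigr]=O(t_n^{-2})$ is removed after rescaling by $(B_2)$, exactly as the second-order bias in~(i), while the quadratic Taylor remainder is $O_P((W_n-x_0)^2)=O_P\bigl((k_n(\log(n/k_n))^2)^{-1}\bigr)$ and is negligible after multiplication by $\sqrt{k_n}\log(n/k_n)$.

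I expect the main obstacle to be the bias analysis of part~(i): expanding $\bar\mu_n/\alpha_n$ to the precise order $\gamma^p(1+p\,t_n^{-1})$ while keeping a remainder small enough to survive the large factor $\sqrt{k_n}\log(n/k_n)$, and in particular verifying that the $\log\ell$-contribution and the boundary contributions are controlled at exactly the rates supplied by $(B_1)$ and $(B_3)$.
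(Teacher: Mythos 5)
Your proposal is correct and follows essentially the same route as the paper: the same normalization of the stochastic term (Theorem \ref{thm1}(i) plus Proposition \ref{propseq} with $\bar\ell\equiv1$ from $(B_4)$, giving $\beta_n\sqrt{n}\,a_n/\alpha_n\to1$ for $\beta_n$ as in \eqref{eq:beta}), the same decomposition of $\bar\mu_n$ via \eqref{eq:decomp1} into a pure-Pareto integral, a $\log\ell$ correction killed by $(B_1)$ through $(1+x)^p=1+px+O(x^2)$, and boundary terms $r_n^{(1)},r_n^{(2)}$ of order $n^{-\rho}(\log n)^p$ killed by $(B_2)$--$(B_3)$, and for part (ii) the same delta-method step followed by $(1+pt_n^{-1})^{1/p}=1+t_n^{-1}+O(t_n^{-2})$ and $(B_2)$. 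The one point where you genuinely deviate is the expansion of the main term: the paper identifies $\mu_n^{(1)}=\frac{n\gamma^p}{(\rho+1)^{p+1}}\Gamma(p+1,t_n)$ and quotes Olver's asymptotic expansion of the incomplete gamma function to obtain $\mu_n^{(1)}/\alpha_n=\gamma^p\bigl(1+pt_n^{-1}+O(t_n^{-2})\bigr)$, whereas you expand $\bigl(1-\log v/\log(n/k_n)\bigr)^p$ termwise inside $\int_0^1v^\rho(\cdot)\,dv$. The two computations are equivalent (the substitution $v=e^{-s}$ turns your integral into the incomplete gamma integral), but your termwise version needs one extra routine step you did not state: the bound $(1+x)^p=1+px+O(x^2)$ is not uniform over your range of integration, since $-\log v/\log(n/k_n)$ is unbounded as $v\downarrow0$. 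Splitting the integral at $v=k_n/n$ repairs this: on $[k_n/n,1]$ the expansion holds with a uniform constant and the remainder is $O\bigl((\log(n/k_n))^{-2}\bigr)$ because $\int_0^1v^\rho\log^2v\,dv<\infty$, while the piece over $(0,k_n/n)$ is $O\bigl((k_n/n)^{\rho+1}(\log(n/k_n))^p\bigr)=o(t_n^{-2})$. With that sentence added, your bias analysis delivers exactly the paper's \eqref{eq:mu1}, and the rest of your argument matches the paper's proof step for step.
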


We show that condition ($B_1$) is satisfied by the model $\ell(s) = 1 + b(s)$, where the function $b$ is such that $\sqrt{k_n}\sup_{0\le u \le k_n/n}|b(u)| \to0$. To prove this, observe that  $\sup_{0\le u \le k_n/n}
1/|\log u| = 1/\log(n/k_n)$ and hence
\[
\begin{split}
\sqrt{k_n}&\log\frac n{k_n} \sup_{0\le u \le k_n/n}\left|\frac{\log\ell(u)}{\log u}\right|
\le \sqrt{k_n} \sup_{0\le u \le k_n/n} |\log(1+b(u)| \\
& =  \sqrt{k_n} \sup_{0\le u \le k_n/n} |b(s) + O(b^2(s))|
\to0,
\end{split}
\]
if $\sqrt{k_n}\sup_{0\le u \le k_n/n}|b(u)| \to0$.

In some submodels of \eqref{eq:quant} the Hill estimator can be centered at $\gamma$ to have normal asymptotic distribution. The strict Pareto model when $\ell \equiv 1$ is the simplest example of these models. This simple  model satisfies the conditions of Corollary \ref{cor2}.
From \eqref{eq:norm1} we also see that under these conditions the estimator $\widehat\gamma_n$ can not be centered at $\gamma$ to have asymptotic distribution. However, Corollary \ref{cor2} allows the construction of asymptotic confidence intervals for
$\gamma$. The estimator $\widehat\gamma_n$ is not scale invariant. Accordingly, the slowly varying function $\ell \equiv c$, $c \ne 1$, does not satisfy condition ($B_1$).

\section{Simulation results}
\label{simulation}
In this section we evaluate the performance of the estimator $\widehat\gamma_n$ through simulations.
In the first simulation study we compare $\widehat\gamma_n$ to the Hill, Pickands (\cite{Pickands}) and moment estimators.
Tail index estimators have good performance in the strict Pareto model.
However, in practical situations it is very rare when data fit to a simple distribution. For the simulation we use the following model proposed by Hall \cite{Hall}:
\begin{equation}
\label{eq:Hall}
Q(1-s)=s^{-\gamma}D_1[1+D_2s^{\beta}(1+o(1))]\ \quad{\rm as}\ s\to0,
\end{equation}
where $D_1>0$, $D_2\not=0$ and $\beta>0$ are constants. The Hall model satisfies condition ($B_1$) if $D_1=1$ and $k_n^{\beta + \frac12}/n^\beta \to 0$.

We repeated the simulations 1000 times and we assumed $n=1000$ for the sample size and $k_n=136$ for the sample fraction size.
We used $\bar\ell \equiv 1$ for the weights $d_{i,n}$.
We examined the following two cases of the Hall model:

\medskip
{\it Case 1}: $\beta = 2$, $D_2 =1$ and $D_1 = 1/ \sqrt{e}$.

{\it Case 2}: $\beta = 1$, $D_2 = 4/3$ and $D_1 = \,e^{-2/3}$.

\medskip
In both cases we assume $o(1) \equiv 0$ in \eqref{eq:Hall}.
Tables \ref{table_mean1} and \ref{table_mse1} contain the average simulated estimates (mean) and the calculated empirical mean square errors (MSE) for {\it Case 1}.
Using the mean square error as criterion, we see that for $\rho \le 1$ the performance of $\widehat\gamma_n$ generally increases
as $\gamma$ decreases from 2 to 0.5. For $\gamma \ge 1$ the weights improve the performance of $\widehat\gamma_n$ significantly ($\rho = 0.5, 1, 2$).
For the thin tail pertaining to $\gamma = 0.5$ we also see a trend that the performance of $\widehat\gamma_n$ improves as the value of $p$ increases from 1 to 3. The same conclusion holds for $\gamma = 1$ when $\rho = 2$.
It can be also seen that $\widehat\gamma_n$ with $p=1, 2, 3$ and appropriate $\rho$ value performs better than the Pickands and the moment estimator. The Pickands estimator has poor performance for $\gamma = 2$. Nonetheless, the Hill and the moment estimator tend to have good estimates.

Tables \ref{table_mean2} and \ref{table_mse2} contain the simulation results for {\it Case 2}. This case is farther from the strict Pareto model than {\it Case 1}.
In {\it Case 2} for $\rho \le 0.5$ the estimator $\widehat\gamma_n$ works slightly better than in the first case.
The performance of the Hill estimator is slightly worse in this case, while the other estimators have similar performance compared to the first case.

	\begin{table}[H]
		\caption{Mean in the Hall model for {\it Case 1}.}
		\label{table_mean1}
		\scalebox{0.8}{
			\begin{tabular}{ccccclllcc}	
\cline{1-8}			\multicolumn{8}{c}{mean}                                                                                                                                                                                                                                 \\ \cline{1-8} 
		 & &\multicolumn{3}{c|}{$\widehat\gamma_n$}                            & \multicolumn{1}{c|}{\multirow{2}{*}{Hill}}                         & \multicolumn{1}{c|}{\multirow{2}{*}{Pickands}} & \multicolumn{1}{c|}{\multirow{2}{*}{moment}} & 
				 \\ \cline{1-5}
	\multicolumn{1}{c|}{$ \rho $} &	\multicolumn{1}{c|}{$\gamma$} & $p=1$      & $p=2$     & \multicolumn{1}{c|}{$p=3$}   & &    & \multicolumn{1}{c|}{}                      &                        \\
\cline{1-8} 
\multicolumn{1}{c|}{0}& \multicolumn{1}{c|}{0.5}   & 0.502461 & 0.5598067 & \multicolumn{1}{c|}{0.6278012} & \multicolumn{1}{c|}{0.4874154}  & \multicolumn{1}{c|}{0.5388793}& \multicolumn{1}{c|}{0.4832535}                                                                 \\

\multicolumn{1}{c|}{} & \multicolumn{1}{c|}{1}  & 1.252406 & 1.347012

 & \multicolumn{1}{c|}{1.461455} &\multicolumn{1}{l|}{0.9872326}  &\multicolumn{1}{c|}{1.021725}              & \multicolumn{1}{c|}{0.9745838}                               \\
 
 \multicolumn{1}{c|}{}  & \multicolumn{1}{c|}{1.5}   &2.002351  & 2.136447 & \multicolumn{1}{c|}{2.299039}  & \multicolumn{1}{c|}{1.48705}  & \multicolumn{1}{c|}{1.52004}              & \multicolumn{1}{c|}{1.471576}                                \\
 
 \multicolumn{1}{c|}{} & \multicolumn{1}{c|}{2}  & 2.752296 & 2.926308
 
 & \multicolumn{1}{c|}{3.137432} & \multicolumn{1}{c|}{1.986867}  & \multicolumn{1}{c|}{2.022467}              & \multicolumn{1}{c|}{1.969981}                               \\
\cline{1-8} 
 
 \multicolumn{1}{c|}{0.5}&\multicolumn{1}{c|}{0.5}    & 0.4207121 &0.4523482 & \multicolumn{1}{c|}{0.4918764} & \multicolumn{1}{c|}{0.4874154}  & \multicolumn{1}{c|}{0.5388793}              & \multicolumn{1}{c|}{0.4832535}                              \\
 \multicolumn{1}{c|}{}&\multicolumn{1}{c|}{1}     & 1.088022 & 1.138332 & \multicolumn{1}{c|}{1.200928} &\multicolumn{1}{l|}{0.9872326}  &\multicolumn{1}{c|}{1.021725}              & \multicolumn{1}{c|}{0.9745838}                                   \\
 
 \multicolumn{1}{c|}{}&\multicolumn{1}{c|}{1.5}   & 1.755332 & 1.826024 & \multicolumn{1}{c|}{1.913608}  & \multicolumn{1}{c|}{1.48705}  & \multicolumn{1}{c|}{1.52004}              & \multicolumn{1}{c|}{1.471576}                                 \\
 
 \multicolumn{1}{c|}{}&\multicolumn{1}{c|}{2}     & 2.422641 & 2.514022 & \multicolumn{1}{c|}{2.626971} & \multicolumn{1}{c|}{1.986867}  & \multicolumn{1}{c|}{2.022467}              & \multicolumn{1}{c|}{1.969981}                                   \\
\cline{1-8} 
 \multicolumn{1}{c|}{1}&\multicolumn{1}{c|}{0.5}     & 0.37965551 & 0.3994002 & \multicolumn{1}{c|}{0.4240878} &\multicolumn{1}{c|}{0.4874154}  & \multicolumn{1}{c|}{0.5388793}              & \multicolumn{1}{c|}{0.4832535}                               \\
 
 \multicolumn{1}{c|}{}&\multicolumn{1}{c|}{1}      & 1.005246 & 1.03595 & \multicolumn{1}{c|}{1.073641} & \multicolumn{1}{l|}{0.9872326}  &\multicolumn{1}{c|}{1.021725}              & \multicolumn{1}{c|}{0.9745838}                                    \\
 
 \multicolumn{1}{c|}{}& \multicolumn{1}{c|}{1.5}       &1.630837  & 1.673773 & \multicolumn{1}{c|}{1.726098} & \multicolumn{1}{c|}{1.48705}  & \multicolumn{1}{c|}{1.52004}              & \multicolumn{1}{c|}{1.471576}                           \\
 
 \multicolumn{1}{c|}{}& \multicolumn{1}{c|}{2}      & 2.256427 & 2.311814 & \multicolumn{1}{c|}{2.379069} & \multicolumn{1}{c|}{1.986867}  & \multicolumn{1}{c|}{2.022467}              & \multicolumn{1}{c|}{1.969981}                                   \\
\cline{1-8} 
 \multicolumn{1}{c|}{2} &\multicolumn{1}{c|}{0.5}   & 0.33886111 & 0.3486395  & \multicolumn{1}{c|}{0.3606289} & \multicolumn{1}{c|}{0.4874154}  & \multicolumn{1}{c|}{0.5388793}              & \multicolumn{1}{c|}{0.4832535}                                  \\
 
 \multicolumn{1}{c|}{} &\multicolumn{1}{c|}{1}   & 0.9227323 & 0.9375759 & \multicolumn{1}{c|}{0.9552161} & \multicolumn{1}{l|}{0.9872326}  &\multicolumn{1}{c|}{1.021725}              & \multicolumn{1}{c|}{0.9745838}                                \\
 
 \multicolumn{1}{c|}{}&\multicolumn{1}{c|}{1.5}    &1.506604 &1.527265  & \multicolumn{1}{c|}{1.551595} & \multicolumn{1}{c|}{1.48705}  & \multicolumn{1}{c|}{1.52004}              & \multicolumn{1}{c|}{1.471576}   \\
 
 \multicolumn{1}{c|}{}&\multicolumn{1}{c|}{2}    & 2.090475  & 2.117078 & \multicolumn{1}{c|}{2.148269} & \multicolumn{1}{c|}{1.986867}  & \multicolumn{1}{c|}{2.022467}              & \multicolumn{1}{c|}{1.969981}                                             \\
 
\cline{1-8} 
			\end{tabular}}
		\end{table}

	\begin{table}[H]
\vspace{10pt}
				\caption{MSE in the Hall model for {\it Case 1}.}
				\label{table_mse1}
				\scalebox{0.75}{
						\begin{tabular}{ccccclllcc}	
\cline{1-8}						\multicolumn{8}{c}{MSE}                                                                                                                                                                                                                                 \\ \cline{1-8} 
		& &\multicolumn{3}{c|}{$\widehat\gamma_n$}                            & \multicolumn{1}{c|}{\multirow{2}{*}{Hill}}                         & \multicolumn{1}{c|}{\multirow{2}{*}{Pickands}} & \multicolumn{1}{c|}{\multirow{2}{*}{moment}} & 
							\\ \cline{1-5}
	\multicolumn{1}{c|}{$ \rho $} &	\multicolumn{1}{c|}{$\gamma$} & $p=1$      & $p=2$     & \multicolumn{1}{c|}{$p=3$}   & &    & \multicolumn{1}{c|}{}                      &                      
							\\
\cline{1-8}    
\multicolumn{1}{c|}{0}& \multicolumn{1}{c|}{0.5}   & 0.008489717 & 0.004758226 & \multicolumn{1}{c|}{0.01848487} & \multicolumn{1}{c|}{0.001920372}  & \multicolumn{1}{c|}{0.1238975}& \multicolumn{1}{c|}{0.008732585}                                                                 \\ \multicolumn{1}{c|}{} & \multicolumn{1}{c|}{1}  & 0.06713682 & 0.124994 & \multicolumn{1}{c|}{0.2205786} & \multicolumn{1}{l|}{0.007254561}   & \multicolumn{1}{c|}{0.1510138}              & \multicolumn{1}{c|}{0.01456819}                               \\
						
\multicolumn{1}{c|}{}  & \multicolumn{1}{c|}{1.5}   &0.2601122  &0.415274 & \multicolumn{1}{c|}{0.6550551}  &  \multicolumn{1}{c|}{0.01616043}  & \multicolumn{1}{c|}{0.191689}              & \multicolumn{1}{c|}{0.02365229}                                \\       
 \multicolumn{1}{c|}{} & \multicolumn{1}{c|}{2}  & 0.579775 & 0.8761246 & \multicolumn{1}{c|}{1.322759} & \multicolumn{1}{c|}{0.02863798} &  \multicolumn{1}{c|}{0.2457045}              & \multicolumn{1}{c|}{0.0362088}                               \\
\cline{1-8} 
\multicolumn{1}{c|}{0.5}&\multicolumn{1}{c|}{0.5}    & 0.006915487 &0.002963965 & \multicolumn{1}{c|}{0.0009153005}& \multicolumn{1}{c|}{0.001920372}  & \multicolumn{1}{c|}{0.1238975}& \multicolumn{1}{c|}{0.008732585}                             \\
\multicolumn{1}{c|}{}&\multicolumn{1}{c|}{1}     & 0.01033168 & 0.02195434 & \multicolumn{1}{c|}{0.04367552}& \multicolumn{1}{c|}{0.007254561}   & \multicolumn{1}{c|}{0.1510138}              & \multicolumn{1}{c|}{0.01456819}                               \\
\multicolumn{1}{c|}{}&\multicolumn{1}{c|}{1.5}   & 0.07105951 & 0.1126648 & \multicolumn{1}{c|}{0.1784538} & \multicolumn{1}{c|}{0.01616043}  & \multicolumn{1}{c|}{0.191689}              & \multicolumn{1}{c|}{0.02365229}                              \\
\multicolumn{1}{c|}{}&\multicolumn{1}{c|}{2}     & 0.189099 & 0.2755773 & \multicolumn{1}{c|}{0.4061787} & \multicolumn{1}{c|}{0.02863798} &  \multicolumn{1}{c|}{0.2457045}              & \multicolumn{1}{c|}{0.0362088}                                \\
\cline{1-8} 
\multicolumn{1}{c|}{1}&\multicolumn{1}{c|}{0.5} & 0.01503467 & 0.01069469 & \multicolumn{1}{c|}{0.006382895} &\multicolumn{1}{c|}{0.001920372}  & \multicolumn{1}{c|}{0.1238975}& \multicolumn{1}{c|}{0.008732585}                                       \\	\multicolumn{1}{c|}{}&\multicolumn{1}{c|}{1}      & 0.002311005 & 0.003667682 & \multicolumn{1}{c|}{0.007952766} & \multicolumn{1}{c|}{0.007254561}   & \multicolumn{1}{c|}{0.15101382}              & \multicolumn{1}{c|}{0.01456819}                                 \\
\multicolumn{1}{c|}{}& \multicolumn{1}{c|}{1.5}       &0.02231372  & 0.03559411 & \multicolumn{1}{c|}{0.05684494} & \multicolumn{1}{c|}{0.01616043}  & \multicolumn{1}{c|}{0.191689}              & \multicolumn{1}{c|}{0.02365229}                             \\
\multicolumn{1}{c|}{}& \multicolumn{1}{c|}{2}      & 0.07504283 & 0.1068695 & \multicolumn{1}{c|}{0.1538997} & \multicolumn{1}{c|}{0.02863798} &  \multicolumn{1}{c|}{0.2457045}              & \multicolumn{1}{c|}{0.0362088}                               \\
\cline{1-8} 
\multicolumn{1}{c|}{2} &\multicolumn{1}{c|}{0.5}   & 0.02645074 & 0.02340072  & \multicolumn{1}{c|}{0.01992387} & \multicolumn{1}{c|}{0.001920372}  & \multicolumn{1}{c|}{0.1238975}& \multicolumn{1}{c|}{0.008732585}                                \\
\multicolumn{1}{c|}{} &\multicolumn{1}{c|}{1}   & 0.007996087 & 0.005951954 & \multicolumn{1}{c|}{0.004100054} & \multicolumn{1}{c|}{0.007254561}   & \multicolumn{1}{c|}{0.1510138}              & \multicolumn{1}{c|}{0.01456819}                                \\
\multicolumn{1}{c|}{}&\multicolumn{1}{c|}{1.5}    &0.004666964 &0.005432634  & \multicolumn{1}{c|}{0.007437678} & \multicolumn{1}{c|}{0.01616043}  & \multicolumn{1}{c|}{0.191689}              & \multicolumn{1}{c|}{0.02365229}   \\
\multicolumn{1}{c|}{}&\multicolumn{1}{c|}{2}    & 0.01646337& 0.02210106 & \multicolumn{1}{c|}{0.03052874} & \multicolumn{1}{c|}{0.02863798} &  \multicolumn{1}{c|}{0.2457045}              & \multicolumn{1}{c|}{0.0362088}                                              \\
\cline{1-8} 
					\end{tabular}}
				\end{table}

\goodbreak
	\begin{table}[H]
		\caption{Mean in the Hall model for {\it Case 2}.}
		\label{table_mean2}
		\scalebox{0.8}{
			\begin{tabular}{ccccclllcc}	
\cline{1-8}				\multicolumn{8}{c}{mean}                                                                                                                                                                                                                                 \\ \cline{1-8} 
				& &\multicolumn{3}{c|}{$\widehat\gamma_n$}                            & \multicolumn{1}{c|}{\multirow{2}{*}{Hill}}                         & \multicolumn{1}{c|}{\multirow{2}{*}{Pickands}} & \multicolumn{1}{c|}{\multirow{2}{*}{moment}} & 
				\\ \cline{1-5}
				\multicolumn{1}{c|}{$ \rho $} &	\multicolumn{1}{c|}{$\gamma$} & $p=1$      & $p=2$     & \multicolumn{1}{c|}{$p=3$}   & &    & \multicolumn{1}{c|}{}                      &                       \\
\cline{1-8} 
				
				\multicolumn{1}{c|}{0}& \multicolumn{1}{c|}{0.5}   & 0.4589447 & 0.5124346 & \multicolumn{1}{c|}{0.5782619} & \multicolumn{1}{c|}{0.4184847 }  & \multicolumn{1}{c|}{0.609304}& \multicolumn{1}{c|}{0.4811199}                                                   \\
				\multicolumn{1}{c|}{} & \multicolumn{1}{c|}{1}  & 1.20889 & 1.299565
				& \multicolumn{1}{c|}{1.411133} & \multicolumn{1}{c|}{0.9183019}  & \multicolumn{1}{c|}{1.011627}              & \multicolumn{1}{c|}{0.9377913}                               \\
				
				\multicolumn{1}{c|}{}  & \multicolumn{1}{c|}{1.5}   &1.958834 & 2.089031 & \multicolumn{1}{c|}{2.248614}  &\multicolumn{1}{c|}{1.418119}  
				& \multicolumn{1}{c|}{1.488072}              & \multicolumn{1}{c|}{1.423793}                                \\
				\multicolumn{1}{c|}{} & \multicolumn{1}{c|}{2}  & 2.708779 & 2.878913
				
				& \multicolumn{1}{c|}{3.086975} &\multicolumn{1}{c|}{1.917936}  &  \multicolumn{1}{c|}{1.980745}              & \multicolumn{1}{c|}{1.916807}                               \\
\cline{1-8} 
				
				\multicolumn{1}{c|}{0.5}&\multicolumn{1}{c|}{0.5}    & 0.3846648 &0.4124053 & \multicolumn{1}{c|}{0.4486737} & \multicolumn{1}{c|}{0.4184847 }  & \multicolumn{1}{c|}{0.609304}& \multicolumn{1}{c|}{0.4811199}                               \\
				
				\multicolumn{1}{c|}{}&\multicolumn{1}{c|}{1}     & 1.051975 & 1.09872 & \multicolumn{1}{c|}{1.157892}& \multicolumn{1}{c|}{0.9183019}  & \multicolumn{1}{c|}{1.011627}              & \multicolumn{1}{c|}{0.9377913}                               \\
				\multicolumn{1}{c|}{}&\multicolumn{1}{c|}{1.5}   & 1.719284 & 1.786526 & \multicolumn{1}{c|}{1.870711} &\multicolumn{1}{c|}{1.418119}  
				& \multicolumn{1}{c|}{1.488072}              & \multicolumn{1}{c|}{1.423793}                                \\
				\multicolumn{1}{c|}{}&\multicolumn{1}{c|}{2}     & 2.386594 & 2.47458 & \multicolumn{1}{c|}{2.58415} &\multicolumn{1}{c|}{1.917936}  &  \multicolumn{1}{c|}{1.980745}              & \multicolumn{1}{c|}{1.916807}                               \\
\cline{1-8} 
				\multicolumn{1}{c|}{1}&\multicolumn{1}{c|}{0.5}     & 0.3484573 & 0.36485 & \multicolumn{1}{c|}{0.3861637} & \multicolumn{1}{c|}{0.4184847 }  & \multicolumn{1}{c|}{0.609304}& \multicolumn{1}{c|}{0.4811199}                              \\
				\multicolumn{1}{c|}{}&\multicolumn{1}{c|}{1}      & 0.9740478 & 1.001849 & \multicolumn{1}{c|}{1.036418} & \multicolumn{1}{c|}{0.9183019}  & \multicolumn{1}{c|}{1.011627}              & \multicolumn{1}{c|}{0.9377913}                               \\
				\multicolumn{1}{c|}{}& \multicolumn{1}{c|}{1.5}       &1.599638  & 1.639798 & \multicolumn{1}{c|}{1.689107} &\multicolumn{1}{c|}{1.418119}  
				& \multicolumn{1}{c|}{1.488072}              & \multicolumn{1}{c|}{1.423793}                                \\
				\multicolumn{1}{c|}{}& \multicolumn{1}{c|}{2}      & 2.225229 & 2.277898 & \multicolumn{1}{c|}{2.34219}  &\multicolumn{1}{c|}{1.917936}  &  \multicolumn{1}{c|}{1.980745}              & \multicolumn{1}{c|}{1.916807}                               \\
\cline{1-8} 
				\multicolumn{1}{c|}{2} &\multicolumn{1}{c|}{0.5}   & 0.3135496 & 0.3210441  & \multicolumn{1}{c|}{0.3304326} & \multicolumn{1}{c|}{0.4184847 }  & \multicolumn{1}{c|}{0.609304}& \multicolumn{1}{c|}{0.4811199}                            \\
				\multicolumn{1}{c|}{} &\multicolumn{1}{c|}{1}   & 0.8974208 & 0.9103876 & \multicolumn{1}{c|}{0.9258765} & \multicolumn{1}{c|}{0.9183019}  & \multicolumn{1}{c|}{1.011627}              & \multicolumn{1}{c|}{0.9377913}                               \\
				\multicolumn{1}{c|}{}&\multicolumn{1}{c|}{1.5}    &1.481292 &1.500177  & \multicolumn{1}{c|}{1.522475} &\multicolumn{1}{c|}{1.418119}  
				& \multicolumn{1}{c|}{1.488072}              & \multicolumn{1}{c|}{1.423793}                                \\
				
				\multicolumn{1}{c|}{}&\multicolumn{1}{c|}{2}    & 2.065163  & 2.090035 & \multicolumn{1}{c|}{2.119249} &\multicolumn{1}{c|}{1.917936}  &  \multicolumn{1}{c|}{1.980745}              & \multicolumn{1}{c|}{1.916807}                               \\
\cline{1-8} 
		\end{tabular}}
	\end{table}

	\begin{table}[H]
\vspace{10pt}
		\caption{MSE in the Hall model for {\it Case 2}.}
		\label{table_mse2}
		\scalebox{0.75}{
			\begin{tabular}{ccccclllcc}	
\cline{1-8}				\multicolumn{8}{c}{MSE}                                                                                                                                                                                                                                 \\ \cline{1-8} 
				& &\multicolumn{3}{c|}{$\widehat\gamma_n$}                            & \multicolumn{1}{c|}{\multirow{2}{*}{Hill}}                         & \multicolumn{1}{c|}{\multirow{2}{*}{Pickands}} & \multicolumn{1}{c|}{\multirow{2}{*}{moment}} & 
				\\ \cline{1-5}
				\multicolumn{1}{c|}{$ \rho $} &	\multicolumn{1}{c|}{$\gamma$} & $p=1$      & $p=2$     & \multicolumn{1}{c|}{$p=3$}   & &    & \multicolumn{1}{c|}{}                      &                       \\ 
\cline{1-8} 
				\multicolumn{1}{c|}{0}& \multicolumn{1}{c|}{0.5}   &0.002356798 & 0.001205257 & \multicolumn{1}{c|}{0.008251327} & \multicolumn{1}{c|}{0.0081828} & 
				\multicolumn{1}{c|}{0.1375034}& \multicolumn{1}{c|}{0.00768532}                 \\
				\multicolumn{1}{c|}{} & \multicolumn{1}{c|}{1}  &0.04670408 & 0.09401281 & \multicolumn{1}{c|}{0.1764996} &\multicolumn{1}{c|}{0.01327334}  & \multicolumn{1}{c|}{0.1501311}              & \multicolumn{1}{c|}{0.01701934}                               \\
				
				\multicolumn{1}{c|}{}  & \multicolumn{1}{c|}{1.5}   & 0.2177388  &0.3566961 & \multicolumn{1}{c|}{0.576717}  &\multicolumn{1}{c|}{0.02193555}  & \multicolumn{1}{c|}{0.1891187}              & \multicolumn{1}{c|}{0.0278058}                        \\  
				
				\multicolumn{1}{c|}{} & \multicolumn{1}{c|}{2}  & 0.5154611 & 0.7899285 & \multicolumn{1}{c|}{1.210099} & \multicolumn{1}{c|}{0.03416945}   & \multicolumn{1}{c|}{0.2417847}              & \multicolumn{1}{c|}{0.04120925}                               \\
\cline{1-8} 
				\multicolumn{1}{c|}{0.5}&\multicolumn{1}{c|}{0.5}    & 0.01375736 &0.008212465 & \multicolumn{1}{c|}{0.003358186} & \multicolumn{1}{c|}{0.0081828} & 
				\multicolumn{1}{c|}{0.1375034}& \multicolumn{1}{c|}{0.00768532}                 \\
				\multicolumn{1}{c|}{}&\multicolumn{1}{c|}{1}     & 0.004917317 & 0.01222534 & \multicolumn{1}{c|}{0.02793089}&
				\multicolumn{1}{c|}{0.01327334}  & \multicolumn{1}{c|}{0.1501311}              & \multicolumn{1}{c|}{0.01701934}                               \\
				\multicolumn{1}{c|}{}&\multicolumn{1}{c|}{1.5}   & 0.05338891 & 0.08794855 & \multicolumn{1}{c|}{0.1443359} &\multicolumn{1}{c|}{0.02193555}  & \multicolumn{1}{c|}{0.1891187}              & \multicolumn{1}{c|}{0.0278058}                           \\
				\multicolumn{1}{c|}{}&\multicolumn{1}{c|}{2}     & 0.1591721 & 0.23588 & \multicolumn{1}{c|}{0.3536742} & \multicolumn{1}{c|}{0.03416945}   & \multicolumn{1}{c|}{0.2417847}              & \multicolumn{1}{c|}{0.04120925}                               \\
\cline{1-8} 
				\multicolumn{1}{c|}{1}&\multicolumn{1}{c|}{0.5}     & 0.02334328 & 0.0186799 & \multicolumn{1}{c|}{0.01343518} & \multicolumn{1}{c|}{0.0081828} & 
				\multicolumn{1}{c|}{0.1375034}& \multicolumn{1}{c|}{0.00768532}                 \\	\multicolumn{1}{c|}{}&\multicolumn{1}{c|}{1}      & 0.002585846 & 0.00202712 & \multicolumn{1}{c|}{0.003527147} &\multicolumn{1}{c|}{0.01327334}  & \multicolumn{1}{c|}{0.1501311}              & \multicolumn{1}{c|}{0.01701934}                                                                                            \\
				\multicolumn{1}{c|}{}& \multicolumn{1}{c|}{1.5}       &0.0145548  & 0.02439929 & \multicolumn{1}{c|}{0.04097613} &\multicolumn{1}{c|}{0.02193555}  & \multicolumn{1}{c|}{0.1891187}              & \multicolumn{1}{c|}{0.0278058}               \\
				\multicolumn{1}{c|}{}& \multicolumn{1}{c|}{2}      & 0.05925014 & 0.08613785 & \multicolumn{1}{c|}{0.1266097}& \multicolumn{1}{c|}{0.03416945}   & \multicolumn{1}{c|}{0.2417847}              & \multicolumn{1}{c|}{0.04120925}                               \\
\cline{1-8} 
				\multicolumn{1}{c|}{2} &\multicolumn{1}{c|}{0.5}   & 0.03507632 & 0.03235116  & \multicolumn{1}{c|}{0.02909741} & 
				\multicolumn{1}{c|}{0.0081828} & 
				\multicolumn{1}{c|}{0.1375034}& \multicolumn{1}{c|}{0.00768532}                 \\
				\multicolumn{1}{c|}{} &\multicolumn{1}{c|}{1}   & 0.01217588 & 0.00972373 & \multicolumn{1}{c|}{0.007239784} &\multicolumn{1}{c|}{0.01327334}  & \multicolumn{1}{c|}{0.1501311}              & \multicolumn{1}{c|}{0.01701934}                               \\
				\multicolumn{1}{c|}{}&\multicolumn{1}{c|}{1.5}    &0.004400968 &0.004131617  & \multicolumn{1}{c|}{0.00474087}&
				\multicolumn{1}{c|}{0.02193555}  & \multicolumn{1}{c|}{0.1891187}              & \multicolumn{1}{c|}{0.0278058}               \\
				\multicolumn{1}{c|}{}&\multicolumn{1}{c|}{2}    & 0.01175158
				&0.01574687 & \multicolumn{1}{c|}{0.02203485} & \multicolumn{1}{c|}{0.03416945}   & 
				\multicolumn{1}{c|}{0.2417847}              & \multicolumn{1}{c|}{0.04120925}                               \\
\cline{1-8} 
		\end{tabular}}
	\end{table}
\vfill\eject
By Corollary \ref{cor2}(ii) we infer that
\begin{equation}
\label{eq:Zn}
Z_n:=\frac1{\widehat\gamma_n\sqrt2}\left(\frac{1+2\rho}{1+\rho}\right)^{1/2}\sqrt{k_n}\log\frac n{k_n}
\big\{\widehat\gamma_n- \gamma\big(1+ t_n^{-1}\big)\big\}
\stackrel{\mathcal{D}}{\longrightarrow} N(0,1).
\end{equation}
Asymptotic confidence intervals for $\gamma$ can be constructed using either \eqref{eq:norm1} or \eqref{eq:Zn}.
In the second simulation study we investigated how fast the distribution result \eqref{eq:Zn} kicks in.
We simulated the quantity $Z_n$ 5000 times.
According to condition ($B_2$), we used $k_n$ values less than $\log^2n$. First, we investigated the Fr\'echet distribution with shape parameter $1/\gamma$ that belongs to the Hall
model with parameters $D_1=1$, $D_2=-\gamma/2$ and $\beta=1$.
The simulation was done for $\gamma=1$, $\rho=1$, $p=1$, $n=900$ and $k_n=10$.
We found empirically that $n=900$ is the threshold sample size to obtain a good normal approximation in \eqref{eq:Zn}.
Figure \ref{fig:h1} contains the histogram of the simulated $Z_n$ quantities and the fitted normal curve with estimated parameters.
The mean of the simulated $Z_n$ values is -0.06, 
the simulated
standard deviation is 0.8974. 
The mean of the simulated $\widehat\gamma_n$ values is 1.1116. 
The bias of the mean is in accordance with the bias term $\gamma t_n^{-1}$ in \eqref{eq:Zn}.
Due to the biased estimator in the leading factor $1/(\widehat\gamma_n\sqrt2)$ of $Z_n$, the simulated standard deviation of $Z_n$ is smaller than the asymptotic value 1. We performed the chi-square test for normality, and we
obtained the p-value 0.2965. 

\begin{figure}[H]
\centering
\includegraphics[trim = 0 0 0 50, clip]{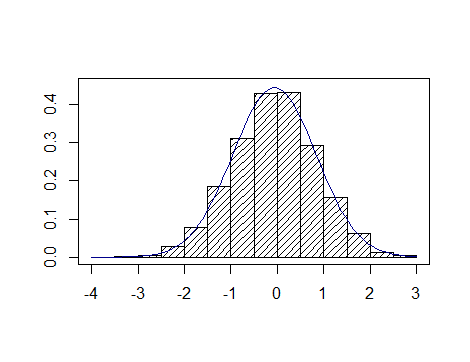}
\vspace*{-1.2cm}
\caption{}
\label{fig:h1}
\end{figure}

\goodbreak
We investigated two more distributions from the Hall model:
Case 1: $\gamma=1$, $D_1 =1$ and $D_2 = 1/2$, $\beta = 3/4$;
Case 2: $\gamma=2$, $D_1 =1$ and $D_2 = 1$, $\beta = 1$.
We used $\rho=3$, $p=2$, $n=500$ and $k_n=7$ for Case 1, and $\rho=1$, $p=1$, $n=900$ and $k_n=10$ for Case 2.
These $n$ values are the threshold sample sizes to obtain a good normal approximation in \eqref{eq:Zn}.
We obtained the following numerical results. Case 1: mean of the simulated $Z_n$ values: 0.0013, 
standard deviation of the $Z_n$ values: 0.9127, 
mean of the simulated $\widehat\gamma_n$ values: 1.0667; 
Case 2: mean of the simulated $Z_n$ values: -0.0393, 
standard deviation of the $Z_n$ values: 0.8878, 
mean of the simulated $\widehat\gamma_n$ values: 2.2267. 
The p-value of the chi-square test for normality is 0.323 
for Case 1, and 0.6428 
for Case 2.
Figures \ref{fig:h2} and \ref{fig:h3} contain the histograms of the simulated quantities and the fitted normal curves for Case 2 and Case 3, respectively.

\begin{figure}[H]
\centering
\includegraphics[trim = 0 0 0 50, clip]{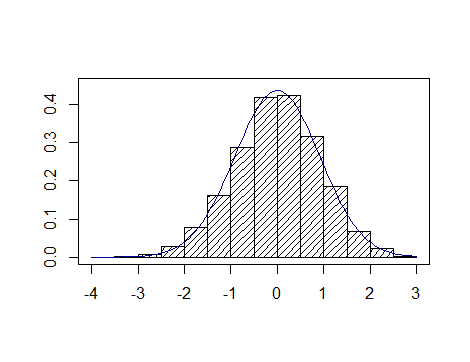}
\vspace*{-1.2cm}
\caption{}
\label{fig:h2}
\end{figure}

\begin{figure}[H]
\centering
\includegraphics[trim = 0 0 0 50, clip]{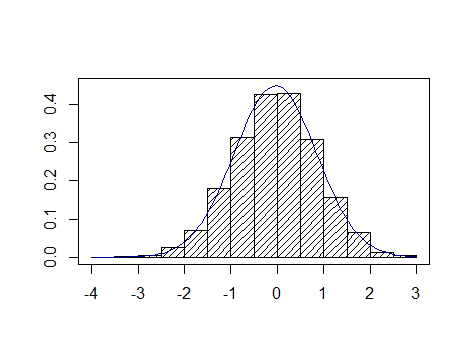}
\vspace*{-1.2cm}
\caption{}
\label{fig:h3}
\end{figure}

\section{Proofs}
Let $G$ be a distribution function on $\mathbb R$ and assume the existence of constants $d_n>0$ and $c_n$ such that
\begin{equation}
\label{eq:maxdomain}
\lim_{n\to\infty} G^n(d_n x + c_n) = G_c(x)
\end{equation}
for all $x\in\mathbb R$ with some nondegenerate distribution function $G_c(x)$ necessarily being an extreme value distribution function
\[
G_c(x)=\exp\big(-(1+cx)^{-1/c}\big),
\]
where $c\in \mathbb R$ and $x$ is such that $1+cx>0$ and $(1+cx)^{-1/c}$ is interpreted as $e^{-x}$ if $c=0$.
Whenewer \eqref{eq:maxdomain} holds we say that that $G$ belongs to the maximum domain of attraction of $G_c$ and we write $G\in\Delta(c)$.
Set $U(s) := -G^\leftarrow(1-s)$, $0 \le s <1$, where the arrow means the inverse function.
From \cite[equation (1.12)]{CsHM} we know the following statement.
\begin{proposition}
\label{prop:maxdomain}
$G\in\Delta(c)$ if and only if
\[
\lim_{s\downarrow0} \frac{U(xs)-U(ys)}{U(vs)-U(ws)} = \frac{x^{-c}-y^{-c}}{v^{-c}-w^{-c}},
\]
where for $c=0$ the limit is understood as $(\log x-\log y)/(\log v-\log w)$.
\end{proposition}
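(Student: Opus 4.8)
The plan is to identify both condition~\eqref{eq:maxdomain} and the stated quotient relation with one property of the tail quantile function $V(t) := G^\leftarrow(1 - 1/t)$, namely extended regular variation of index $c$, and then to show each condition equivalent to it. Since $U(s) = -G^\leftarrow(1 - s) = -V(1/s)$, the substitution $t = 1/s$ gives $U(xs) = -V(t/x)$, and the two minus signs cancel in the quotient, so the stated limit becomes
\[
\frac{V(t/x) - V(t/y)}{V(t/v) - V(t/w)} \longrightarrow \frac{x^{-c} - y^{-c}}{v^{-c} - w^{-c}}, \qquad t \to \infty.
\]
Relabelling the four arguments by their reciprocals turns this into the de Haan quotient form of extended regular variation of index $c$ for the non-decreasing function $V$ (with a ratio of logarithms on the right when $c = 0$); the monotonicity of $V$, inherited from $G^\leftarrow$, is the structural feature I use throughout.

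For the implication $G \in \Delta(c) \Rightarrow$ quotient relation, I would take logarithms in~\eqref{eq:maxdomain} and use $-\log G(u) \sim 1 - G(u)$ as $u$ approaches the right endpoint of $G$, so that $G^n(d_n x + c_n) \to G_c(x)$ is equivalent to the tail convergence $n\big(1 - G(d_n x + c_n)\big) \to (1 + cx)^{-1/c}$ for every $x$ with $1 + cx > 0$. Inverting this convergence of monotone functions by the standard inversion lemma for generalized inverses (Vervaat) yields the de Haan relation: there exist $a(t) > 0$ and $b(t)$ with $\big(V(\lambda t) - b(t)\big)/a(t) \to (\lambda^c - 1)/c$ for every $\lambda > 0$. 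Writing each of the two differences in the quotient as a difference of two such centred and scaled terms, the centring $b(t)$ and the scale $a(t)$ cancel, leaving precisely $(x^{-c} - y^{-c})/(v^{-c} - w^{-c})$.

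For the converse I would start from the quotient relation and fix the denominator arguments so that, by monotonicity of $V$, the reference difference $a(t) := V(t/v) - V(t/w)$ is eventually positive; the relation then reads $V(t/x) - V(t/y) = a(t)\big[(x^{-c} - y^{-c})/(v^{-c} - w^{-c}) + o(1)\big]$. Holding $y$ fixed gives the extended-regular-variation relation for $V$ with auxiliary function $a$, while evaluating the quotient at $x = v/\lambda$, $y = w/\lambda$ (so that the numerator is $a(\lambda t)$) shows $a(\lambda t)/a(t) \to \lambda^c$, that is, $a$ is regularly varying of index $c$. Re-inverting this de Haan relation, again by the inversion lemma, recovers $n\big(1 - G(a_n x + b_n)\big) \to (1 + cx)^{-1/c}$ for suitable constants $a_n, b_n$, hence $G^n(a_n x + b_n) \to G_c(x)$ and $G \in \Delta(c)$.

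The step I expect to be most delicate is the two-way passage between the tail $1 - G$ and the quantile $V$: making the inversion rigorous requires careful treatment of the generalized inverse and of the sign introduced by $U(s) = -G^\leftarrow(1 - s)$, and the pointwise limits above must be upgraded to locally uniform convergence in the arguments by a Potter-bound / uniform-convergence argument, which is legitimate precisely because $V$ is monotone. The case $c = 0$ has to be handled separately at each step, since the factors $1/c$ degenerate and the limit relations pass to the logarithmic ($\Pi$-variation) form; apart from this bookkeeping the argument is identical.
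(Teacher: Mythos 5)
The paper never proves this proposition: it is quoted as equation (1.12) of \cite{CsHM}, i.e.\ treated as a known fact from de Haan's classical theory of maximum domains of attraction, so there is no in-paper argument to compare yours against. Your proposal essentially reconstructs that classical proof, and the reconstruction is correct in outline. The translation $U(s)=-V(1/s)$ with $V(t)=G^\leftarrow(1-1/t)$ is right and the signs cancel as you claim; the forward direction is the standard cancellation of the common centering $b(t)=V(t)$ and scale $a(t)$ in de Haan's relation $(V(\lambda t)-V(t))/a(t)\to(\lambda^c-1)/c$, valid as long as $v\ne w$ so that the limiting denominator $v^{-c}-w^{-c}$ is nonzero; and your converse---taking $a(t):=V(t/v)-V(t/w)$ as auxiliary function, rescaling by the constant $(v^{-c}-w^{-c})/c$ to land on the canonical form, and checking via $x=v/\lambda$, $y=w/\lambda$ that $a(\lambda t)/a(t)\to\lambda^c$---is exactly how the equivalence is established in the literature (de Haan--Ferreira, Theorem 1.1.6, together with Vervaat's inversion lemma). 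What your route buys is self-containedness: the statement is derived rather than imported. What it costs is that every step you yourself flag as delicate (the two applications of the inversion lemma, the passage from integer $n$ to a continuous parameter $t$, upgrading pointwise to locally uniform convergence via monotonicity, and the separate $\Pi$-variation bookkeeping when $c=0$) is genuine work, and those steps together constitute the whole content of the classical theorem that the paper, quite reasonably, chose to cite instead of prove. So regard your text as a correct proof sketch whose deferred details are standard but not free; if you wanted a truly complete argument you would either have to write out the inversion arguments carefully or, in the end, cite the same sources the paper does.
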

Let $RV^\infty_\alpha$ ($RV^0_\alpha$) denote the class of regularly varying functions at infinity (zero) with index $\alpha$.
\begin{lemma}
\label{lemma1}
Assume the conditions of Theorem \ref{thm1}. Then the distribution function $H(\cdot):=(-K(1-\cdot))^\leftarrow$ satisfies $H\in\Delta (-\rho)$.
\end{lemma}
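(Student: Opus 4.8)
The plan is to verify the criterion of Proposition~\ref{prop:maxdomain} with $c=-\rho$. First I would identify the tail quantile function of $H$. Since $H=(-K(1-\cdot))^{\leftarrow}$, its generalized inverse is $H^{\leftarrow}(u)=-K(1-u)$, so the function $U$ appearing in Proposition~\ref{prop:maxdomain} is $U(s)=-H^{\leftarrow}(1-s)=K(s)$. One checks that $s\mapsto K(s)$ is monotone near $0$, so that $-K(1-\cdot)$ is a bona fide inverse of a distribution function and membership in $\Delta(\cdot)$ is meaningful. Thus it suffices to show, for all $0<y<x$ and $0<w<v$,
\[
\lim_{s\downarrow0}\frac{K(xs)-K(ys)}{K(vs)-K(ws)}=\frac{x^{\rho}-y^{\rho}}{v^{\rho}-w^{\rho}},
\]
with the usual reading $\log(x/y)/\log(v/w)$ when $\rho=0$; this is exactly the displayed relation of Proposition~\ref{prop:maxdomain} for $c=-\rho$.

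The heart of the matter is the asymptotics of a single increment $K(xs)-K(ys)=\int_{ys}^{xs}J(u)\,dg(u)$. I would not differentiate $g$, since the quantile function $Q$, and hence $g$, need not be smooth; instead I would integrate by parts to move the differential onto $J$, which is smooth by Condition~$\mathbf{\bar L}$(a) with $dJ(u)=u^{\rho-1}\bar\ell(u)(\rho+\varepsilon(u))\,du$:
\[
\int_{ys}^{xs}J\,dg=J(xs)g(xs)-J(ys)g(ys)-\int_{ys}^{xs}g\,dJ.
\]
Writing $\log Q(1-u-)=\gamma\log(1/u)+\log\ell(u)$ from $F\in{\cal R}_{\gamma}$, substituting $u=ts$, and using the uniform convergence theorem for the slowly varying functions $\bar\ell$ and $\ell$ (so that $\bar\ell(ts)/\bar\ell(s)\to1$, $\log(1/(ts))/\log(1/s)\to1$ and $\log\ell(ts)-\log\ell(s)\to0$ uniformly for $t\in[y,x]$), a first-order expansion of $g(ts)=-(\gamma\log(1/s)-\gamma\log t+\log\ell(ts))^{p}$ in the small parameter $1/\log(1/s)$ yields
\[
\int_{ys}^{xs}J\,dg\sim p\gamma^{p}\,\frac{x^{\rho}-y^{\rho}}{\rho}\,s^{\rho}\bar\ell(s)\Big(\log\tfrac1s\Big)^{p-1}
\]
(and the analogous expression with $\log(x/y)$ for $\rho=0$). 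Dividing two such increments makes the common factor $s^{\rho}\bar\ell(s)(\log(1/s))^{p-1}$ cancel and produces the ratio above, which establishes $H\in\Delta(-\rho)$.

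The main obstacle is hidden in the last display: at leading order both the boundary term and the integral $\int g\,dJ$ contribute a term of size $s^{\rho}\bar\ell(s)(\log(1/s))^{p}$, and these cancel exactly, so the genuine order of the increment is one logarithmic power lower, namely $(\log(1/s))^{p-1}$. One must therefore carry the expansion of $g(ts)$ one term further than the naive leading order. A second, more delicate cancellation occurs at this finer level: the contribution of the slowly varying part $\log\ell$ of the quantile function enters through $x^{\rho}\log\ell(xs)-y^{\rho}\log\ell(ys)$ in the boundary term and through $\rho\int_y^x\log\ell(ts)\,t^{\rho-1}\,dt$ in the integral, and although $\log\ell(s)$ may itself diverge, these two contributions cancel up to $o(1)$ by the uniform convergence theorem. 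Verifying these two cancellations rigorously, and showing that the errors coming from $\varepsilon(\cdot)$ in Condition~$\mathbf{\bar L}$(a) and from the slowly varying remainders are genuinely negligible, is where the real work lies; Potter-type bounds then deliver the stated equivalence uniformly in $t\in[y,x]$.
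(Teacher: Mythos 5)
Your reduction to Proposition~\ref{prop:maxdomain} with $U=K$, and the increment asymptotics you aim for, namely
\[
K(xs)-K(ys)\sim p\gamma^{p}\,\frac{x^{\rho}-y^{\rho}}{\rho}\,s^{\rho}\bar\ell(s)\bigl(\log\tfrac1s\bigr)^{p-1}
\qquad(s\downarrow0),
\]
with the reading $\log(x/y)$ when $\rho=0$, are exactly what the paper establishes (they agree with \eqref{eq:K1}, \eqref{eq:K2} and \eqref{eq:Kdiff}), but you reach them by a genuinely different route. The paper changes variables to the quantile scale, writing $K(t)=-\int_{Q(1/2)}^{Q(1-t)}J_1(u)\,du$ with $J_1(u)=pJ(1-F(u))(\log u)^{p-1}u^{-1}\in RV^\infty_{-(\rho/\gamma)-1}$, and for $\rho\ne0$ simply invokes Karamata's theorem, which yields asymptotics of $K(t)$ itself up to an additive constant; only the single case $\rho=0$ is handled by a mean-value argument. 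Your integration by parts stays on the $s$-scale, exploits the differentiability of $\bar\ell$ granted by condition $\mathbf{\bar L}$(a) (which the paper's proof of this lemma never uses), and treats all $\rho>-1/2$ uniformly, at the price of the two cancellations you describe; the paper's route needs no cancellation at all, because Karamata's theorem digests the slowly varying factor $\ell$ of $Q$ automatically, and its intermediate formulas \eqref{eq:K1}--\eqref{eq:Kdiff} get reused later in the proof of Proposition~\ref{propseq}.

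There is, however, one concrete gap in the expansion as you prescribe it. Expanding $g(ts)=-\bigl(A-\gamma\log t+\log\ell(ts)\bigr)^{p}$ with $A=\gamma\log(1/s)$ ``in the small parameter $1/\log(1/s)$'' leaves a Taylor remainder of order $A^{p-2}\bigl(\gamma\log t-\log\ell(ts)\bigr)^{2}$. Slow variation does not force $(\log\ell(s))^{2}=o(\log(1/s))$ --- take $\ell(s)=\exp\{(\log(1/s))^{1/2}\}$ --- so this remainder need not be $o(A^{p-1})$, i.e.\ it can be as large as the very term you keep; and since it is only an order bound, you cannot extract from it the $t$-independent part that would cancel between the boundary term and $\int g\,dJ$, as you do at the two explicit orders. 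The repair is to center the expansion at $h(s):=\log Q(1-s-)$ rather than at $A$ (equivalently, apply the mean value theorem to $z\mapsto z^{p}$, which is precisely the paper's device in its $\rho=0$ case): the perturbation $h(ts)-h(s)=-\gamma\log t+\log\bigl(\ell(ts)/\ell(s)\bigr)$ is then uniformly $-\gamma\log t+o(1)$ for $t\in[y,x]$ by the uniform convergence theorem, the zeroth-order ($t$-free) terms cancel exactly as in your first cancellation, the first-order term produces the display above with $h(s)^{p-1}\sim A^{p-1}$, and the remainder is honestly $O(A^{p-2})$. A bonus of this re-centering is that your ``second, more delicate cancellation'' disappears altogether: the $\log\ell$ contributions enter only through $\log(\ell(ts)/\ell(s))$ and are individually negligible, so they need not cancel against each other. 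With that modification your argument goes through and gives an alternative, case-free proof of the lemma.
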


\begin{proof}
A simple calculation yields
$K(t)=-\int_{Q(1/2)}^{Q(1-t)}J_1(u)du$, where
\[
J_1(u)=pJ (1- F(u))(\log u)^{p-1}u^{-1} \in RV^\infty_{-(\rho/\gamma)-1}.
\]

If $\rho>0$ then $K(t)=\int_{Q(1-t)}^\infty J_1(u)du+c$, where $c$ is a constant, and by Karamata's theorem (see e.g. \cite[Theorem 1.5.11]{BGT}) we obtain 
\[
K(t)=\frac\gamma\rho Q(1-t) J_1(Q(1-t))(1+o(1))+c\quad (t\to0).
\]
Similarly, if $\rho<0$ then
\[
K(t)=\frac\gamma\rho Q(1-t) J_1(Q(1-t))(1+o(1))\quad (t\to0).
\]
Theorem 1.5.12 of \cite{BGT} implies that
\begin{equation}
\label{FQ}
1-F(Q(1-t)) \sim t\quad (t\to0).
\end{equation}
Then using \eqref{FQ} and $\log Q(1-t) \sim -\gamma\log t\ (t\to0)$, we have
\[
Q(1-t) J_1(Q(1-t)) \sim p(-\gamma\log t)^{p-1}J(t)\quad (t\to0).
\]
Hence, if $\rho>0$ then
\begin{equation}
\label{eq:K1}
K(t)=t^\rho\hat L(t)(1+o(1))+c \quad (t\to0),
\end{equation}
and if $\rho<0$ then
\begin{equation}
\label{eq:K2}
K(t)=-t^\rho\hat L(t)(1+o(1))\quad (t\to0),
\end{equation}
where
\begin{equation}
\label{eq:hatL}
\hat L(t)=\frac{p\gamma^p}{|\rho|} (-\log t)^{p-1}\bar\ell(t)\in RV^0_0.
\end{equation}
Equations \eqref{eq:K1}, \eqref{eq:K2} and \eqref{eq:hatL} imply that for $\rho \ne 0$,
\begin{equation}
\label{eq:Klimit1}
\lim_{s\downarrow0} \frac{K(xs)-K(ys)}{K(vs)-K(ws)} = \frac{x^{\rho}-y^{\rho}}{v^{\rho}-w^{\rho}}.
\end{equation}
If $\rho=0$, then for distinct values $0<x,y<\infty$,
\begin{equation}
\label{eq:K}
K(xs) - K(ys) = \bar\ell(\xi)(g(xs)-g(ys))
\end{equation}
where $\xi$ is between $xs$ and $ys$. Since $\bar\ell$ is slowly varying,
we have
\begin{equation}
\bar\ell(\xi) \sim \bar\ell(s)\quad (s\downarrow0).
\end{equation}
Moreover, by Lagrange's mean value theorem, with some $\eta$ between $\log Q(1-(xs)-)$ and $\log Q(1-(ys)-)$,
\begin{equation}
g(xs)-g(ys) = p\eta^{p-1}(\log Q(1-(ys)-) - \log Q(1-(xs)-)).
\end{equation}
Using \eqref{eq:quant} and the fact that $\log Q(1-s-)$ is slowly varying at zero, we have
\begin{equation}
\label{eq:logQ}
\eta\sim \log Q(1-s-) \sim -\gamma\log s \quad (s\downarrow0),
\end{equation}
and
\begin{equation}
\label{eq:Q}
\log Q(1-(ys)-) - \log Q(1-(xs)-) \to \gamma \log(x/y)\quad (s\downarrow0).
\end{equation}
By \eqref{eq:K}-\eqref{eq:Q} it follows that
\begin{equation}
\label{eq:Kdiff}
K(xs) - K(ys) \sim p \gamma^p \log(x/y) (-\log s)^{p-1} \bar\ell(s) \quad (s\downarrow0).
\end{equation}
Therefore,
\begin{equation}
\label{eq:Klimit2}
\lim_{s\downarrow0}
\frac{K(xs) - K(ys)}{K(vs) - K(ws)}
= \frac{\log x - \log y}{\log v - \log w}
\end{equation}
for all distinct $0< x,y,v,w <\infty$.
Equations \eqref{eq:Klimit1}, \eqref{eq:Klimit2} and Proposition \ref{prop:maxdomain} imply the statement of the lemma.
\end{proof}

Choose any sequence of positive constants $\delta_n$ such that $n\delta_n<n$ and $n\delta_n\to0$ as $n\to\infty$.
The following two sequences of functions govern the asymptotic behavior of $S_n(p)$:
\[
\psi_n(x)=\psi_{n,K}(x)=
\begin{cases}
\frac{k_n^{1/2}\left\{K\left(\frac{k_n}n+x\frac{k_n^{1/2}}n\right)-K\left(\frac{k_n} n\right)\right\}}{n^{1/2}a_n} & \text{if}\quad -\frac{k_n^{1/2}}2\le x\le \frac{k_n^{1/2}}2,\\
\psi_n\left(-\frac{k_n^{1/2}}2\right) & \text{if}\quad -\infty<x<-\frac{k_n^{1/2}}2,\\
\psi_n\left(\frac{k_n^{1/2}}2\right) & \text{if}\quad \frac{k_n^{1/2}}2<x<\infty,
\end{cases}
\]
and
\[
\varphi_n(y)=\varphi_{n,K}(y)=
\begin{cases}
\frac{K(y/n)-K(1/n)}{n^{1/2}a_n} & \text{if}\quad 0<y\le n-n\delta_n,\\
\frac{K(1-\delta_n)-K(1/n)}{n^{1/2}a_n} & \text{if}\quad n-n\delta_n<y<\infty.
\end{cases}
\]

\begin{lemma}
\label{lemma2}
Assume the conditions of Theorem \ref{thm1}. Then $\psi_n(x), \varphi_n(y)\to0$, $x\in\mathbb R$, $y>0$.
\end{lemma}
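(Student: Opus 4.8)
The plan is to fix $x\in\mathbb R$ and $y>0$ and to note first that, since $k_n^{1/2}/2\to\infty$ and $n-n\delta_n\to\infty$, for all large $n$ the point $x$ lies in $[-k_n^{1/2}/2,k_n^{1/2}/2]$ and $y$ lies in $(0,n-n\delta_n]$; hence only the first branch in each definition is relevant, and it suffices to show that the corresponding main-case expressions vanish in the limit. The common denominator is controlled by Proposition \ref{propseq}: writing $c:=p\gamma^p\big(2/((1+\rho)(1+2\rho))\big)^{1/2}$ and using $a_n=\sigma(1/n,k_n/n)$ together with \eqref{sigma}, a short rearrangement (pulling $n^{1/2}$ inside the power $(k_n/n)^{\rho+1/2}$) gives
\[
n^{1/2}a_n\sim c\,k_n^{1/2}\Big(\frac{k_n}n\Big)^{\rho}\Big(\log\frac n{k_n}\Big)^{p-1}\bar\ell\Big(\frac{k_n}n\Big).
\]

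For $\varphi_n(y)$ I would estimate the numerator $K(y/n)-K(1/n)$ from the asymptotics of $K$ obtained in the proof of Lemma \ref{lemma1}. For $\rho\ne0$, equations \eqref{eq:K1}, \eqref{eq:K2} and \eqref{eq:hatL} give $K(t)=\pm t^{\rho}\hat L(t)(1+o(1))+\mathrm{const}$ with $\hat L\in RV^0_0$, so for fixed $y$ the difference is of order $n^{-\rho}(\log n)^{p-1}\bar\ell(1/n)$; the same order estimate holds for $\rho=0$ by \eqref{eq:Kdiff}. Dividing by $n^{1/2}a_n$ the factors $n^{-\rho}$ cancel and one is left with a bound of the form
\[
|\varphi_n(y)|\le \frac{C\big(\log n\big)^{p-1}\bar\ell(1/n)}{k_n^{\rho+1/2}\big(\log(n/k_n)\big)^{p-1}\bar\ell(k_n/n)}.
\]
Because $k_n/n\to0$ forces $\log(n/k_n)\to\infty$, the ratio $(\log n/\log(n/k_n))^{p-1}$ is dominated by a power of $\log k_n$, while $\rho>-1/2$ makes $k_n^{\rho+1/2}$ a genuine power of $k_n$; after bounding $\bar\ell(1/n)/\bar\ell(k_n/n)$ by $k_n^{\varepsilon}$ (with $\varepsilon<\rho+1/2$) the whole expression is dominated by $(\log k_n)^{O(1)}k_n^{-(\rho+1/2-\varepsilon)}\to0$, so $\varphi_n(y)\to0$.

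For $\psi_n(x)$ the relevant window $\big[k_n/n,\,k_n/n+xk_n^{1/2}/n\big]$ has length $xk_n^{1/2}/n=o(k_n/n)$, so I would approximate the increment of $K$ over it by $J(k_n/n)$ times the increment of $g$, exactly in the spirit of \eqref{eq:K}--\eqref{eq:Kdiff}. Writing $\log Q(1-t)=-\gamma\log t+\log\ell(t)$ and using the slow variation of $\log Q(1-\cdot)$, the increment of $g=-(\log Q(1-\cdot-))^p$ over the window is $\sim p\gamma^p(\log(n/k_n))^{p-1}\,x/k_n^{1/2}$, whence
\[
k_n^{1/2}\Big\{K\Big(\frac{k_n}n+x\frac{k_n^{1/2}}n\Big)-K\Big(\frac{k_n}n\Big)\Big\}\sim p\gamma^p\,x\Big(\frac{k_n}n\Big)^{\rho}\Big(\log\frac n{k_n}\Big)^{p-1}\bar\ell\Big(\frac{k_n}n\Big).
\]
Dividing by $n^{1/2}a_n$ and using the first display, every factor cancels except a constant multiple of $x/k_n^{1/2}$, so $\psi_n(x)\to0$.

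The main obstacle is the uniform handling of the slowly varying functions and logarithmic factors. In both arguments $\bar\ell$ is evaluated at $1/n$ and at $k_n/n$, whose ratio $k_n$ tends to infinity rather than staying fixed, so plain slow variation is insufficient; here I would invoke the uniform convergence theorem and Potter's bounds (\cite[Theorem 1.5.6]{BGT}) to replace $\bar\ell(1/n)/\bar\ell(k_n/n)$ by $k_n^{\pm\varepsilon}$ and then lean on $\rho>-1/2$. A secondary technical point is making the increment estimate for $K$ rigorous: since $Q$ need not be differentiable, one works with the Stieltjes increment of $g$ and the regular-variation bounds supplied by condition $\mathbf{\bar L}$, controlling $J$ and $\bar\ell$ uniformly over the shrinking window around $k_n/n$.
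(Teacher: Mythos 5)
Your strategy is genuinely different from the paper's: the paper disposes of this lemma in one line, combining Lemma \ref{lemma1} ($H\in\Delta(-\rho)$) with Lemmas 2.11 and 2.12 of \cite{CsHM}, which yield exactly the convergences $\psi_n(x)\to0$ and $\varphi_n(y)\to0$ once the domain-of-attraction membership with index $-\rho<1/2$ is known. You instead recompute everything directly from the asymptotics of $K$ and of $\sigma(1/n,k_n/n)$. That route is admissible: Proposition \ref{propseq} is proved from CsHM Lemmas 2.9--2.10 and Lemma \ref{lemma1} only, so invoking it here is not circular (note also that \eqref{sigma} shows $\sigma(1/n,k_n/n)>0$ eventually, so $a_n=\sigma(1/n,k_n/n)$ for large $n$). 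Your treatment of $\varphi_n$ is essentially correct: the estimate $|K(y/n)-K(1/n)|=O\big(n^{-\rho}(\log n)^{p-1}\bar\ell(1/n)\big)$ from \eqref{eq:K1}, \eqref{eq:K2} and \eqref{eq:Kdiff}, together with Potter bounds and $\rho+1/2>0$, gives $\varphi_n(y)\to0$.

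The gap is in the $\psi_n$ half. You claim that the increment of $g=-(\log Q(1-\cdot\,-))^p$ over the window $\big[k_n/n,\,(1+xk_n^{-1/2})k_n/n\big]$ is $\sim p\gamma^p(\log(n/k_n))^{p-1}xk_n^{-1/2}$, ``using the slow variation of $\log Q(1-\cdot)$''. Writing $\log Q(1-t)=-\gamma\log t+\log\ell(t)$, the first summand indeed contributes $\gamma\log(1+xk_n^{-1/2})\sim\gamma xk_n^{-1/2}$, but slow variation controls the increment of $\log\ell$ over this shrinking multiplicative window only to the extent that it is $o(1)$; nothing in the model ${\cal R}_\gamma$ makes it $o(k_n^{-1/2})$. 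One can construct $F\in{\cal R}_\gamma$, say $Q(1-s)=s^{-\gamma}e^{h(s)}$ with $h$ slowly varying and having downward drops of size $k_n^{-1/4}$ localized in these windows along a subsequence (downward drops preserve the monotonicity of $Q$), for which the $g$-increment is of strictly larger order than your claimed rate. So the stated asymptotic equivalence, and the resulting claim $\psi_n(x)=O(xk_n^{-1/2})$, are false in general. The conclusion itself survives, because all you need is a one-sided bound: the increment of $g$ is at most $p(\gamma\log(n/k_n))^{p-1}(1+o(1))\big[\gamma xk_n^{-1/2}+o(1)\big]$ by the mean value theorem and the uniform convergence theorem for slowly varying functions, and after dividing by $n^{1/2}a_n$ (using your correct formula $n^{1/2}a_n\sim c\,k_n^{1/2}(k_n/n)^{\rho}(\log(n/k_n))^{p-1}\bar\ell(k_n/n)$, in which the factor $k_n^{1/2}$ cancels against the one in the numerator of $\psi_n$) the extra $o(1)$ term still gives $\psi_n(x)\to0$, only without a rate. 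Replace the ``$\sim$'' by this upper estimate and your argument goes through.
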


\begin{proof}
The statement is a consequence of Lemmas 2.11 and 2.12 of \cite{CsHM} and Lemma \ref{lemma1} above. 
\end{proof}

\begin{proof}[Proof of Proposition \ref{propseq}]
If $\rho>-1/2,\ \rho\ne0$ then by Lemma 2.9 of \cite{CsHM} and by Lemma \ref{lemma1} above we have
\[
\sigma(1/n,k_n/n)\sim \left(\frac{2\rho^2}{(1+\rho)(1+2\rho)}\right)^{1/2}\left(\frac{k_n}n\right)^{\rho+1/2}\hat L(k_n/n),
\]
which is the same as \eqref{sigma}. If $\rho=0$, then by (2.29) of \cite{CsHM} and by Lemma \ref{lemma1}
we have $\sigma(1/n,k_n/n)\sim\sigma(0,k_n/n)$, and using Lemma 2.10 of \cite{CsHM}, we obtain
\[
\lim_{s\downarrow0}\sqrt{s}(K(\lambda s)-K(s))/\sigma(0,s)=2^{-1/2}\log \lambda \quad \text{for all } 0<\lambda<\infty.
\]
Then by \eqref{eq:Kdiff},
\[
\sigma(0,s) \sim p\gamma^p\sqrt{2s}(-\log s)^{p-1}\bar\ell(s) \quad (s\downarrow0),
\]
which implies the statement for $\rho=0$.

Statement $\mu_n\sim\gamma^p\alpha_n$ follows from the facts $-J(\cdot)g(\cdot) \in RV_\rho^0$, $\log Q(1-s-) \sim -\gamma \log s$ and from Karamata's theorem.
\end{proof}

\begin{proof}[Proof of Theorem \ref{thm1}]
The Corollary of \cite{Viharos93} and Lemma \ref{lemma2} imply statement (i).
To prove statement (ii) write
\begin{equation}
\label{eq:decomp1}
\bar\mu_n = \mu_n - n \int^{1/n}_0 J(u)(-g(u)) du + d_{n,n}(-g(1/n))
=: \mu_n - r_n^{(1)} + r_n^{(2)}.
\end{equation}
We have to prove that
\begin{equation}
\label{eq:r}
\frac{r_n^{(1)}}{\sqrt n a_n}\to 0\quad \text{and}\quad \frac{r_n^{(2)}}{\sqrt n a_n}\to 0.
\end{equation}
By Karamata's theorem, \eqref{eq:logQ} and Proposition \ref{propseq}, with some constant $c$ we have
\begin{equation*}
\begin{split}
\frac{r_n^{(1)}}{\sqrt n a_n}
& \sim c\,\frac{ n^{-\rho}\bar\ell( 1/n )(\log n)^p }
{\sqrt n (k_n/n)^{\rho+1/2}(\log(n/k_n))^{p-1}\bar\ell( k_n/n )}\\
&=c\frac{(\log n)^p\bar\ell( 1/n )}
{k_n^{\rho+1/2}(\log(n/k_n))^{p-1}\bar\ell( k_n/n )}.
\end{split}
\end{equation*}
By the Potter bounds (\cite[Theorem 1.5.6]{BGT}), for any $A>1$ and $\delta>0$, there exist $N$ such that
\[
\frac{\bar\ell( 1/n )}{\bar\ell( k_n/n )} \le A k_n^\delta \quad
\text{and}\quad \frac{\log n}{\log(n/k_n)} \le A k_n^\delta\quad \text{for any}\quad n\ge N. 
\]
We choose $\delta>0$ such that $p\delta < \rho -\varepsilon +1/2$. It follows that with some constant $c_1$,
\[
\frac{r_n^{(1)}}{\sqrt n a_n} \le c_1 \frac{\log n}{k_n^{\rho-p\delta+1/2}}
\le c_1 \frac{\log n}{k_n^\varepsilon}
\]
if $n\ge N$. A similar upper bound for $r_n^{(2)}/(\sqrt n a_n)$ implies \eqref{eq:r}.
\end{proof}

\begin{proof}[Proof of Corollary \ref{cor1}]
Using Theorem \ref{thm1} and Proposition \ref{propseq}, we obtain
\begin{equation}
\label{eq:norm2}
\beta_n\left(\frac{S_n(p)}{\alpha_n}-\frac{\mu_n}{\alpha_n}\right) \stackrel{\mathcal{D}}{\longrightarrow} N(0,1),
\end{equation}
where
\begin{equation}
\label{eq:beta}
\beta_n = \frac1{\gamma^p p\sqrt2}\left(\frac{1+2\rho}{1+\rho}\right)^{1/2}\sqrt{k_n}\log\frac n{k_n}.
\end{equation}
Since $\mu_n/\alpha_n \to \gamma^p$ and $\beta_n \to \infty$, we have $S_n(p)/\alpha_n \stackrel{\mathbb{P}}{\longrightarrow} \gamma^p$.

By Lagrange's mean value theorem
\[
\left(\frac{S_n(p)}{\alpha_n}\right)^{1/p}-\left(\frac{\mu_n}{\alpha_n}\right)^{1/p}
= \frac 1p \xi^{(1/p)-1} \left(\frac{S_n(p)}{\alpha_n}-\frac{\mu_n}{\alpha_n}\right)
\]
with some $\xi$ between $\mu_n/\alpha_n$ and $S_n(p)/\alpha_n$.
Therefore,
\[
\beta_n\left(\left(\frac{S_n(p)}{\alpha_n}\right)^{1/p}-\left(\frac{\mu_n}{\alpha_n}\right)^{1/p}\right)
\stackrel{\mathcal{D}}{\longrightarrow} \frac 1p \gamma^{1-p}N(0,1).
\]
\end{proof}

\goodbreak
\begin{proof}[Proof of Corollary \ref{cor2}]
Proof of (i). 
To treat  $\bar\mu_n$, we use the decomposition \eqref{eq:decomp1}.
For $\mu_n$ we obtain
\begin{equation}
	\label{eq:decomp2}
\begin{split}
\mu_n & = n\int^{k_n/n}_0J(u)(\log u^{-\gamma})^p\left(1+\frac{\log\ell(u)}{\log u^{-\gamma}}\right)^p du\\
& = n\int^{k_n/n}_0J(u)(\log u^{-\gamma})^p du \\
& \quad + n\int^{k_n/n}_0J(u)(\log u^{-\gamma})^p \left[\left(1+\frac{\log\ell(u)}{\log u^{-\gamma}}\right)^p-1\right] du \\
& =: \mu_n^{(1)} + \mu_n^{(2)}.
\end{split}
\end{equation}
By Karamata's theorem,
\begin{equation*}
\int^x_0J(u)(\log u^{-\gamma})^p du \sim \frac1{\rho+1} x J(x)(\log x^{-\gamma})^p\quad \text{as } x\to 0.
\end{equation*}
Therefore, using Condition ($B_1$), we have 
\[
\begin{split}
\sqrt{k_n} & \log\frac n{k_n} \frac{|\mu_n^{(2)}|}{\alpha_n} \\
& \le \frac{n\sqrt{k_n}\log\frac n{k_n}}{\alpha_n}\int_0^{k_n/n}J(u) (\log u^{-\gamma})^p du
\sup_{0\le u \le k_n/n}\left| \left(1+\frac{\log\ell(u)}{\log u^{-\gamma}}\right)^p-1\right| \\
& \sim 
\gamma^p \sqrt{k_n}\log\frac n{k_n}\sup_{0\le u \le k_n/n}\left| \left(1+\frac{\log\ell(u)}{\log u^{-\gamma}}\right)^p-1\right|.
\end{split}
\]
By 
\begin{equation}
\label{eq:pp}
(1+x)^p = 1 + px + O(x^2)\quad \text{as } x\to0
\end{equation}
and condition ($B_1$) it follows that
\begin{equation}
\label{eq:mu2}
\sqrt{k_n}\log\frac n{k_n} \frac{|\mu_n^{(2)}|}{\alpha_n} \to 0.
\end{equation}
For the first term we obtain 
\[
\mu_n^{(1)} = \frac{n\gamma^p}{(\rho+1)^{p+1}} \int_{(\rho+1)\log(n/k_n)}^\infty t^pe^{-t}dt
= \frac{n\gamma^p}{(\rho+1)^{p+1}} \Gamma(p+1,(\rho+1)\log(n/k_n)),
\]
where
\[
\Gamma(a,x) = \int_x^\infty t^{a-1} e^{-t} dt
\]
is the incomplete gamma function.
It is known that
\[
\Gamma(a,x) = x^{a-1} e^{-x} \left(\sum_{j=0}^{n-1} b_jx^{-j} + M_n(x)\right),
\]
where $b_j = (a-1)(a-2)\cdots(a-j)$ and
\begin{equation}
\label{eq:Mn}
M_n(x) = O(x^{-n})\quad \text{as } x\to\infty
\end{equation}
(see equation (2.02) in \cite{Olver}). Recall the notation $t_n=(\rho+1)\log(n/k_n)$. Then
\begin{equation}
	\label{eq:mu1}
\frac{\mu_n^{(1)}}{\alpha_n} = \gamma^p (1 + pt_n^{-1} + M_2(t_n) ).
\end{equation}
For $r_n^{(2)}$ in \eqref{eq:decomp1} we obtain
\[
	r_n^{(2)} = \frac{1}{n^\rho(\rho+1)}(\log Q(1-(1/n)-))^p
	\sim \frac{1}{n^\rho(\rho+1)} (\gamma \log n)^p,
\]
implying that
\begin{equation}
\sqrt{k_n}\log\frac n{k_n} \frac{|r_n^{(2)}|}{\alpha_n}
\sim \gamma^p \frac{(\log n)^p}{k_n^{\rho+\frac12}(\log(n/k_n))^{p-1}}.
\end{equation}
Condition ($B_2$) implies $\log(n/k_n) \sim \log n$. Therefore, by Condition ($B_3$) we have
\begin{equation}
	\label{eq:rn2}
	\sqrt{k_n}\log\frac n{k_n} \frac{|r_n^{(2)}|}{\alpha_n} \to 0.
\end{equation}
A similar argument yields that
\begin{equation}
	\label{eq:rn1}
	\sqrt{k_n}\log\frac n{k_n} \frac{|r_n^{(1)}|}{\alpha_n} \to 0
\end{equation}
(cf.~the proof of Theorem \ref{thm1}(ii)).
Recall \eqref{eq:beta}. Using the decompositions \eqref{eq:decomp1} and \eqref{eq:decomp2}, equations \eqref{eq:mu2}, \eqref{eq:mu1}, \eqref{eq:rn2} and \eqref{eq:rn1}, we obtain
\begin{equation}
\label{eq:appr}
\beta_n\left(\frac{S_n(p)}{\alpha_n}-\frac{\bar\mu_n}{\alpha_n}\right)
= \beta_n\left(\frac{S_n(p)}{\alpha_n} - \gamma^p (1 + pt_n^{-1} + M_2(t_n) )\right)
+ o(1).
\end{equation}
Theorem \ref{thm1}(i), condition ($B_2$), \eqref{eq:Mn} and \eqref{eq:appr} imply 
\[
\beta_n \left(\frac{S_n(p)}{\alpha_n}- \gamma^p \left(1 + pt_n^{-1} \right) \right)
\stackrel{\mathcal{D}}{\longrightarrow} N(0,1).
\]
This completes the proof of part (i).

Proof of (ii).
Using the same argument as in the proof of Corollary \ref{cor1}, we have 
\[
\beta_n \bigg(\left(\frac{S_n(p)}{\alpha_n}\right)^{1/p} - \gamma\left(1 + pt_n^{-1}\right)^{1/p} \bigg)
\stackrel{\mathcal{D}}{\longrightarrow} \frac 1p \gamma^{1-p} N(0,1).
\]
Applying \eqref{eq:pp} with $1/p$ replacing $p$, we obtain
\[
\left(1 + \frac p{t_n}\right)^{1/p}
= 1+t_n^{-1} + O(t_n^{-2}).
\]
Therefore, by condition ($B_2$)
\[
\beta_n \bigg(\left(\frac{S_n(p)}{\alpha_n}\right)^{1/p} - \gamma\big(1 + t_n^{-1}\big) \bigg)
\stackrel{\mathcal{D}}{\longrightarrow} \frac 1p \gamma^{1-p} N(0,1).
\]
This completes the proof of part (ii).
\end{proof}

\bigskip\noindent
{\bf Acknowledgement.}
We thank the referee for the valuable remarks and suggestions that helped us to improve the paper.
We also thank P\'eter Kevei for his helpful advice.
This research was supported by grant TUDFO/47138-1/2019-ITM of the Ministry for Innovation and Technology, Hungary.

\bibliographystyle{abbrv}
\bibliography{power-extremes}

\end{document}